\RequirePackage{etex}

\documentclass[12pt,psamsfonts]{amsart}
\usepackage{amsfonts,pstricks}
\usepackage{amsmath, amsthm, amssymb,  mathrsfs, epsfig}
\usepackage{dcpic, pictex, pinlabel}
\usepackage{hyperref,url}
\usepackage{graphicx}
\usepackage{color}
\usepackage{ifpdf}
\usepackage{tikz}
\usepackage{xypic}
\usepackage{extarrows}
\usepackage{multirow}
\usepackage{bm}
\usepackage[foot]{amsaddr}

\DeclareMathOperator{\rk}{rank}
\DeclareMathOperator{\spn}{span}

\begin{document}
\newcommand{\Z}{\mathbb{Z}}
\newcommand{\R}{\mathbb{R}}
\newcommand{\N}{\mathbb{N}}
\newcommand{\M}{\mathcal{M}}
\newcommand{\B}{\mathcal{B}}
\newcommand{\Hilb}{\mathcal{H}}
\newcommand{\C}{\mathbb{C}}
\newcommand{\Sen}{\mathcal{S}}
\newcommand{\A}{\mathcal{A}}
\newcommand{\Q}{\mathbb{Q}}
\newcommand{\I}{\mathcal{I}}
\newcommand{\T}{\mathcal{T}}
\newcommand{\MF}{\operatorname{MF}}
\newcommand{\QMF}{\operatorname{QMF}}
\newcommand{\MC}{\operatorname{MC}}
\newcommand{\QMC}{\operatorname{QMC}}
\newcommand{\GQSAT}{\operatorname{Gen-QSAT}}
\newcommand{\EE}{\operatorname{EE}}
\newcommand{\MEE}{\operatorname{MEE}}

\theoremstyle{definition}

\newtheorem{thm}{Theorem}[section]
\newtheorem{axiom}[thm]{Axiom}
\newtheorem{conjecture}[thm]{Conjecture}
\newtheorem{lem}[thm]{Lemma}
\newtheorem{example}[thm]{Example}
\newtheorem{cor}[thm]{Corollary}
\newtheorem{prop}[thm]{Proposition}
\newtheorem{rem}[thm]{Remark}
\newtheorem{definition}[thm]{Definition}
\newtheorem{measurement}[thm]{Measurement}
\newtheorem{ancilla}[thm]{Ancilla}
\newtheorem{note}[thm]{Note}
\newtheorem{claim}[thm]{Claim}

\numberwithin{equation}{section} \makeatletter
\renewenvironment{proof}[1][\proofname]{\par
    \pushQED{\qed}%
    \normalfont \topsep6\p@\@plus6\p@ \labelsep1em\relax
    \trivlist
    \item[\hskip\labelsep\indent
        \bfseries #1]\ignorespaces
}{%
    \popQED\endtrivlist\@endpefalse
} \makeatother
\renewcommand{\proofname}{Proof}

\newcommand{\qsat}{\textsc{qsat}}
\newcommand{\bra}[1]{\langle #1|}
\newcommand{\ket}[1]{|#1\rangle}
\newcommand{\ketbra}[2]{\ket{#1}{\bra{#2}}}
\newcommand{\onote}[1]{\textcolor{blue}{ {\textbf{(Or:}
#1\textbf{) }}}}

\title{Quantum Max-flow/Min-cut}
\author{Shawn X. Cui$^{1,5}$ \and Michael H. Freedman$^{1,2}$ \and Or Sattath$^{4}$ \and Richard Stong$^{3}$ \and Greg Minton$^3$}

\address{$^1$Department of Mathematics\\University of California\\Santa Barbara, CA 93106}
\email{xingshan@math.ucsb.edu}
\address{$^2$Microsoft Research, Station Q\\ University of California\\ Santa Barbara, CA 93106}
\email{michaelf@microsoft.com}
\address{$^3$Center for Communications Research, La Jolla, CA 92121, USA}
\email{stong@ccrwest.org}
\email{gtminto@ccrwest.org}
\address{$^4$Computer Science Division, University of California,
Berkeley, CA 94720}
\email{sattath@gmail.com}
\address{$^5$Quantum Architectures and Computations Group, Microsoft Research, Redmond, WA}
\date{\today}

\begin{abstract}
The classical max-flow min-cut theorem describes transport through certain idealized classical networks. We consider the quantum analog for tensor networks. By associating an integral capacity to each edge and a tensor to each vertex in a flow network, we can also interpret it as a tensor network, and more specifically, as a linear map from the input space to the output space. The quantum max flow is defined to be the maximal rank of this linear map over all choices of tensors. The quantum min cut is defined to be the minimum product of the capacities of edges over all cuts of the tensor network. We show that unlike the classical case, the quantum max-flow=min-cut conjecture is not true in general. Under certain conditions, e.g., when the capacity on each edge is some power of a fixed integer, the quantum max-flow is proved to equal the quantum min-cut. However, concrete examples are also provided where the equality does not hold.

We also found connections of quantum max-flow/min-cut with entropy of entanglement and the quantum satisfiability problem. We speculate that the phenomena revealed may be of interest both in spin systems in condensed matter and in quantum gravity.
\end{abstract}

\maketitle

\section{Introduction}
Networks transport classical things like power, water, oil, and cars.  Tensor networks transport linear algebraic things like rank and entanglement and should be thought of as the quantum analogy.  We take a first step in comparing the two.  In 1956 two papers \cite{elias1956note, ford1956maximal} set the classical study on a strong algorithmic foundation by proving the max-flow=min-cut theorem $(\MF/\MC)$ which, roughly speaking, says that in a certain idealized limit, capacity or ability of a network to transport is equal to a measure of what needs to be cut to totally sever the network.  This paper explores the quantum analogy of $\MF/\MC$ for tensor networks.



Tensor networks have been extensively studied in physics literature, especially in condensed matter physics and quantum gravity. In quantum many-body systems, the ground states can be represented as tensor networks whose complexity is typically a polynomial of parameters (for example - in terms of the number of particles or number of sites in a lattice), instead of the exponential of parameters under the naive representation. Moreover, it is more convenient to visualize the entanglement entropy of a many-body system using tensor networks. The area law naturally provides an upper bound for the entanglement entropy. Among important classes of tensor networks are Matrix Product States (MPS) \cite{fanns1992} in $1d$ which are known to faithfully represent the ground state of a gapped 1-D Local-Hamiltonian and Projected Entangled Pair States (PEPS) \cite{verstrate2004} in $2d$. For an introduction on tensor networks, see \cite{gharibian2014quantum}, \cite{orus2014}, etc. Connections of tensor networks to holographic duality have also been proposed \cite{maldacena1998} \cite{ryu2006}. Perfect tensors are used to construct holographic states and holographic codes \cite{pastawski2015}. Under certain conditions, the area law is shown to be saturated and the Ryu-Takayanagi formula holds.

One motivation for the study of quantum max-flow min-cut comes from \cite{calegari2010}. In \cite{calegari2010}, the authors considered tensor networks where all edges have the same dimension $k$ and all vertices are assigned the same tensor. It was conjectured that the maximal rank of the tensor network (quantum max-flow) is equal to $k$ raise to the power of classical min-cut (quantum min-cut). If the conjecture were true, it would imply the existence of tensors with certain \lq\lq positive'' properties, the construction of which is part of the work in showing the positivity of the universal pairings in unitary $(2+1)$-TQFTs.

In this paper, we generalize the conjecture in two versions. The first version is more general than the second version, and the original conjecture is a special case of the second version. We show that the quantum $\MF/\MC$ conjecture in both of the two versions does not hold in general, \footnote{Actually the negative of the first version implies the negative of the second version.} but we will give some conditions under which the first version does hold. More detailed results will be given after introducing some terminology below.

Associated with each tensor network is an undirected graph which has some internal edges and open edges. All edges are assigned an integral capacity (dimension) and all vertices are assigned a tensor. In the first version, the tensors at different vertices are independent of each other. All of them are chosen arbitrarily. In the second version, vertices of the same {\it valence type} are required to be assigned the same tensor, where two vertices have the same valence type if they have the same degree and the same sequence of capacities of edges adjacent to each of them (See Section \ref{sec:second}). In particular, if all edges have the same capacity and all vertices have the same degree, then all the vertices also have the same valence type and thus are assigned an identical tensor, which reduces to the original requirement in \cite{calegari2010}. Apparently, the choice of tensors in the second version are more restricted than in the first version.

In either version, we partition the set of open edges into two disjoint subsets called the input set and the output set, and define the input space (resp. output space) to be the tensor product of the Hilbert spaces associated to each edge in the input set (resp. output set). Contracting the tensor network along internal edges results in a linear map $L$ from the input space to the output space. We define the quantum flow as the rank of this linear map, and the quantum max-flow is the maximum value that the quantum flow can take. An edge cut set is a set of edges, the removal of which disconnects the input from the output. The cut value is defined to be the product of the capacities of all edges in an edge cut set, and the quantum min-cut is the minimum value among all the cut values. Every cut provides an interpretation of the linear map $L$ as $L = L_2 L_1$ where the dimension of the intermediate space is the cut value, and therefore the quantum min-cut provides an upper-bound on the quantum max-flow. By definition, the quantum max-flow in the first version is no less than that in the second version. In the following we focus on the quantum $\MF/\MC$ of the first version.




We find that a bit of elementary number theory enters.  When the local degrees of freedom (i.e. capacities) are organized in finite $d_i = d^{k_i}$ dimensional Hilbert spaces for fixed $d$, then there is a straightforward generalization (Theorem \ref{thm:max=min}) of the classical $\MF/\MC$.  However, if - to take the other extreme - the various dimensions $\{d_i\}$ are relatively prime, then new and surprising phenomena are seen.

Already in the case where some Hilbert spaces, bonds of a tensor network, have $\dim = 2$ and others have $\dim = 3$, one observes a surprising drop in \lq\lq capacity'' which in this context means either rank (Section~\ref{sec:first}), entropy of entanglement (Section ~\ref{sec:entanglement}), or the dimension of the unsatisfying subspace of a quantum satisfiability instance (Section~\ref{sec:qsat}).  The lowest possible dimension for this phenomenon is Example \ref{eg:rk7 8}(also see Figure \ref{fig:rk7copy}) in which a tensor network which on the basis of \lq\lq cut reasoning'' appears to have maximal rank $=8$ actually has maximal rank $=7$. Thus for this network, the quantum max flow is strictly less than the quantum min cut. Most of our results can be summarized in the following meta-statement:

\begin{thm}[Main Result, informal] The quantum max flow is at most the quantum min cut. There exist examples where this inequality is strict, and other examples where this inequality becomes an equality.
\end{thm}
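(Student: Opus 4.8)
The statement bundles three independent assertions, and the plan is to treat each in turn: the inequality $\MF \le \MC$ is a general structural fact, while the equality and strict-inequality cases are realized by the constructions of later sections.

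First, the inequality $\MF \le \MC$. This direction mirrors the easy half of the classical theorem, and as noted above it follows directly from the factorization structure of the contraction. Given any edge cut set $C$ separating the input from the output, I would group the internal contractions so that the linear map factors as $L = L_2 L_1$, where $L_1$ sends the input space into the \emph{cut space} $\bigotimes_{e \in C} \Hilb_e$ and $L_2$ sends the cut space to the output space. Since $\rk(L) \le \dim\left(\bigotimes_{e \in C} \Hilb_e\right) = \prod_{e \in C} d_e$, which is precisely the cut value of $C$, taking the minimum over all cut sets gives $\rk(L) \le \MC$ for \emph{every} choice of tensors. Maximizing the left side over tensor assignments then yields $\MF \le \MC$.

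Second, the existence of equality cases. Here I would invoke Theorem \ref{thm:max=min}: when every edge capacity is a power $d^{k_i}$ of a fixed integer $d$, the two quantities coincide. The strategy is to reduce to the classical integral max-flow. I would refine the network by splitting each $d^{k_i}$-dimensional edge into $k_i$ parallel $d$-dimensional wires, apply the classical $\MF/\MC$ theorem to obtain an integral flow saturating the classical min cut of the refined network, and then construct explicit tensors that route the $d$-dimensional degrees of freedom along the flow paths---essentially identity or permutation maps on the wires carrying flow and rank-one fillers elsewhere. This produces a tensor achieving $\rk(L) = \MC$, matching the upper bound from the first part and forcing equality.

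Third, and this is where the genuine content lies, the existence of a strictly smaller max-flow. I would exhibit the concrete network of Example \ref{eg:rk7 8}, built from bonds of dimension $2$ and $3$, for which cut reasoning gives $\MC = 8$. Establishing $\MF = 7$ requires both a construction attaining rank $7$ and---the hard part---an \emph{obstruction} proving that rank $8$ is unattainable for all tensor choices. A naive dimension count cannot suffice, since $8 = \MC$ is consistent with the cut bound; instead I would seek an algebraic or number-theoretic identity forced by the interplay of the dimension-$2$ and dimension-$3$ factors, showing that every maximal minor of the contraction vanishes identically as a polynomial in the tensor entries. Pinning down this obstruction is the main difficulty, as it is precisely the phenomenon that separates the quantum problem from its classical counterpart.
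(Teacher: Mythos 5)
Your first two parts track the paper exactly: the inequality is Corollary \ref{lem:max<min}, obtained by factoring $\beta=\beta_2\circ\beta_1$ through the cut space (Lemma \ref{lem:functorial}), and the equality case is Theorem \ref{thm:max=min}, proved precisely by splitting each $d^{k}$-dimensional edge into $k$ parallel $d$-dimensional edges, extracting edge-disjoint paths via Menger's theorem, and assigning $0/1$ routing tensors along those paths. Nothing to add there.

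The gap is in the third part, which you correctly identify as the real content but then leave open: saying you ``would seek an algebraic or number-theoretic identity'' showing that every maximal minor vanishes identically is a restatement of the goal, not an argument, so as written the strict-inequality claim is unproved. The paper's actual obstruction (Example \ref{eg:rk 2n^2-jk}, specialized to $n=2$, $j=k=1$ to give Example \ref{eg:rk7 8}) is a short dimension count rather than a minor computation. By Proposition \ref{prop:open dense} the maximal rank is attained on an open dense set of assignments, so it suffices to bound the rank for \emph{generic} tensors. For generic $\T_1\in\Hom(\C^2,\C^2\otimes\C^2)$, Lemma \ref{lem:map dense} gives the normal form $|0\rangle\mapsto|00\rangle$, $|1\rangle\mapsto|11\rangle$ after invertible local basis changes, which do not affect the rank of the contraction. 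For generic $\T_2:\C^2\otimes\C^2\to\C^3$, the two maps $\phi_i=\T_2(\,\cdot\otimes|i\rangle):\C^2\to\C^3$ each have $2$-dimensional image inside $\C^3$, and since $2+2>3$ these images must intersect nontrivially: there are nonzero $v,u$ with $\T_2(v\otimes|0\rangle)=\T_2(u\otimes|1\rangle)$, and similarly $w,x$ for $\T_3$. The vector $v\otimes|00\rangle\otimes w-u\otimes|11\rangle\otimes x$ then lies both in the ($8$-dimensional) image of $\mathrm{Id}\otimes\T_1\otimes\mathrm{Id}$ and in the kernel of $\T_2\otimes\T_3$, so the composite has rank at most $8-1=7<8=\QMC$. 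This pigeonhole on image dimensions is exactly where the incompatibility of the $2$- and $3$-dimensional bonds enters, and it is the ingredient your proposal is missing; the paper also gives an independent second derivation of the same bound via quantum satisfiability (Claim \ref{cl:gen_qsat_eq_qmf2}).
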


We really only scratch the surface in this note and cannot yet answer the obvious questions about typical or asymptotic behaviors of large random networks with bonds of relatively prime dimensions, although we do give some elementary lower bounds (Proposition \ref{prop:lower_bound}).

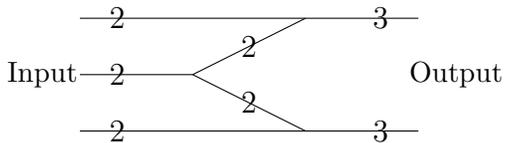
\begin{figure}
\begin{tikzpicture}[scale = 0.5]
\draw (0,0) -- (9,0) ;
\draw (0,3) -- (9,3) ;
\draw (0,1.5) -- (3,1.5) -- (6,3) ;
\draw (3,1.5) -- (6,0) ;

\draw (1,0) node{$2$};
\draw (1,1.5) node{$2$};
\draw (1,3) node{$2$};
\draw (8,0) node{$3$};
\draw (8,3) node{$3$};
\draw (4.5,0.75) node{$2$};
\draw (4.5,2.25) node{$2$};

\draw (-1,1.5) node{{\small{Input}}};
\draw (10,1.5) node{{\small{Output}}};
\end{tikzpicture}
\caption{maximal rank = 7}\label{fig:rk7copy}
\end{figure}

Our examples suggest two lines for future investigations: Example (\ref{eg:rk7 8}) can be read (in light of Section \ref{sec:entanglement}) as revealing an unexpected reluctance of spin $\frac{1}{2}$ and spin $1$ particles to entangle.  At least when coupled by that network, regardless of the tensor coupling, entanglement cannot be maximal.  In an entirely different direction, the capacity of tensor networks may have something to say about quantum gravity.  Entanglement entropy in the holographic side of AdS/CFT duality has been recognized as equivalent to the minimal area (on a related geometric functional) of cut surfaces on the AdS side (\cite{headrick2014hep-th} \cite{ryu2006}).  It is natural to interpret the cut surface dually as a transverse flow (\cite{work-in-progress}) with the flow lines being strands of entanglement.  It would be natural to go further and replace this overly classical ¡°entanglement flow¡± with something more quantum: a tensor network.  If one postulated that fundamental degrees of freedom are finite and not all commonly divisible, then the present paper reveals that entanglement may be unexpectedly small.  Since maximal entanglement, for example between infalling states and Hawking radiation emitted by black holes beyond their ¡°Page time,¡± is central to the ¡°firewall¡± paradox (\cite{almheiri2012hep-th}), a mechanism which reduces entanglement is of potential interest.

\paragraph{\bf Structure.} In Section \ref{sec:classical}, we give a review of the classical max-flow min-cut theorem and Menger's theorem. In Section \ref{sec:first} and \ref{sec:second}, we provide two versions of quantum analogues of the max-flow min-cut theorem.
Unlike the classical case, we prove that the quantum max-flow min-cut theorem only holds for some networks. A number of examples are explained. Section \ref{sec:entanglement} studies the quantum max-flow min-cut theorem from the perspective of entanglement entropy. In Section~\ref{sec:qsat} we show a relationship between quantum max flow and the quantum satisfiability problem.

\section{Classical max-flow min-cut theorem}
\label{sec:classical}

The classical max-flow min-cut theorem was proven by Elias, Feinstein and Shannon \cite{elias1956note} in $1965$, and independently also by Ford and Fulkerson \cite{ford1956maximal}. The theorem states that the maximum amount of the flow in a network from the source to the sink is equal to the minimum capacity that, when removed from the network, causes no flow to pass from the source to the sink. We first give several definitions below and then state the theorem in detail.

Let $G = (V,E)$ be a directed graph $($flow network$)$ where $V$ is the set of vertices and $E$ is the set of edges. Let $S$ and $T$ be the set of sources and sinks, respectively. Namely, $S \ (resp. \; T)$ is the set of vertices with only outgoing $(resp. \; $incoming$)$ edges. The capacity of the edges is a function $c: E \longrightarrow \R^{+}$ such $c_e$ gives the maximum amount of flow through each edge $e \in E$.

A flow from the sources to the sinks is given by a function $f: E \longrightarrow \R^{+}$, such that $f$ satisfies:
\begin{enumerate}
\item  capacity constraint:
$$f_{ij} \leq c_{ij},\, \forall (i,j) \in E,$$
\item conservation of flow:
$$\sum\limits_{\{i: (i,j) \in E\}} f_{ij} = \sum\limits_{\{k: (j,k) \in E\}} f_{jk}, \, \forall j \in V \setminus (S \cup T).$$
\end{enumerate}



\begin{definition}
The value of the flow $f$ in a network $G = (V,E)$ with the capacity function $c$ is defined to be $|f| = \sum\limits_{\{(i,j) \in E: i \in S\}} f_{ij}$. The maximum amount of flow $\MF(G,c)$ is defined to be $\max \{|f|: f \textrm{ is a flow}\}$.
\end{definition}

An edge cut set $C$ is a set of edges such that there exists a partition $V = \bar{S} \sqcup \bar{T}$ such that $S \subset \bar{S}, T \subset \bar{T}$, and $C = \{(u,v) \in E: u \in \bar{S}, v \in \bar{T} \}$. Clearly, the removal of the edges in $C$ from $E$ disconnects all paths from $S$ to $T$. Note that there could still be paths from $T$ to $S$ after the edges in $C$ are removed.

The following theorem is well-known in graph theory.
\begin{definition}
The capacity of an edge cut set $C$ in a network $G = (V,E)$ with the capacity function $c$ is defined to be $|C| = \sum\limits_{\{(i,j) \in C\}} c_{ij}$. We define the min-cut $\MC(G,c) = \min \{|C|: C \textrm{ is an edge cut set}\}$.
\end{definition}

\begin{thm} \cite{elias1956note}\cite{ford1956maximal} \label{thm:classical max min}
$[$Max-flow Min-cut Theorem$]\\$ For a network $G = (V,E)$ with the capacity function $c: E \longrightarrow \R^{+}$, the maximum amount of flow $\MF(G,c)$ from the sources to the sinks is equal to the minimum capacity $\MC(G,c)$.
\end{thm}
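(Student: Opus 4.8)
The plan is to establish the two inequalities $\MF(G,c) \le \MC(G,c)$ and $\MF(G,c) \ge \MC(G,c)$ separately. The first (weak duality) is the routine direction and rests on a single bookkeeping identity: for any flow $f$ and any partition $V = \bar S \sqcup \bar T$ with $S \subset \bar S$ and $T \subset \bar T$, conservation of flow at every internal vertex forces the value $|f|$ to equal the net flow crossing the cut,
$$|f| = \sum_{\substack{(u,v)\in E \\ u \in \bar S,\, v \in \bar T}} f_{uv} \;-\; \sum_{\substack{(u,v)\in E \\ u \in \bar T,\, v \in \bar S}} f_{uv}.$$
I would prove this by summing the conservation equation over all $j \in \bar S \setminus S$ and cancelling the interior terms, using that sources carry no incoming edges. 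Since each $f_{uv} \ge 0$ and $f_{uv} \le c_{uv}$, the right-hand side is at most $\sum_{u \in \bar S,\, v \in \bar T} c_{uv} = |C|$, giving $|f| \le |C|$ for every flow and every cut, hence $\MF \le \MC$.

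For the reverse inequality I would use the augmenting-path/residual-graph method. First, the existence of a maximizing flow follows by compactness: the set of admissible flows is a closed, bounded subset of $\R^{|E|}$ and $|f|$ is continuous, so a maximum is attained at some $f^{*}$. Given $f^{*}$, I form the residual graph, recording for each $(u,v) \in E$ a forward residual capacity $c_{uv} - f^{*}_{uv}$ and a backward residual capacity $f^{*}_{uv}$. The key claim is that at a maximum there is no directed path in the residual graph from $S$ to $T$; otherwise one could push a positive amount of flow along such a path, strictly increasing $|f^{*}|$ and contradicting maximality. Let $\bar S$ be the set of vertices reachable from $S$ in the residual graph and $\bar T$ its complement; this is a legitimate cut since $S \subset \bar S$ and $T \subset \bar T$. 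Every forward edge crossing from $\bar S$ to $\bar T$ must be saturated, $f^{*}_{uv} = c_{uv}$, and every edge from $\bar T$ to $\bar S$ must carry zero flow, for otherwise reachability would extend past $\bar S$. Substituting into the identity of the first paragraph gives $|f^{*}| = \sum_{u \in \bar S,\, v \in \bar T} c_{uv} = |C|$, so $\MF \ge |f^{*}| = |C| \ge \MC$, completing the argument.

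The main obstacle is the convergence issue hidden in the phrase ``push flow along an augmenting path.'' Because capacities are allowed to be arbitrary positive reals, naive iterative augmentation need not terminate, so I would deliberately avoid treating the procedure as an algorithm and instead argue purely at an optimum obtained by compactness; this sidesteps termination entirely while still producing the tight cut. The remaining care is bookkeeping: verifying the flow-across-a-cut identity with the correct signs on backward edges, and checking that the reachability set $\bar S$ genuinely certifies saturation of all crossing forward edges and the vanishing of all crossing backward edges. I would remark that in the integral (or rational) capacity regime the same argument, run iteratively, also yields an \emph{integral} maximum flow, which is the feature whose quantum analogue is at stake in the sequel.
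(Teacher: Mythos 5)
Your argument is correct. Note, however, that the paper does not prove this theorem at all: it is stated as a classical result and attributed to Elias--Feinstein--Shannon and Ford--Fulkerson, so there is no in-paper proof to compare against; the only proof the paper supplies in this section is the derivation of the undirected Menger theorem \emph{from} this statement. Your write-up is the standard argument and is sound: the weak-duality direction via the net-flow-across-a-cut identity (summing conservation over $\bar S\setminus S$ and using that sources have no incoming edges), and the strong direction via a maximizer obtained by compactness of the flow polytope, the nonexistence of residual augmenting paths at the optimum, and the reachability cut $\bar S$ on which forward edges are saturated and backward edges carry zero flow. Using compactness rather than iterated augmentation is the right move for arbitrary real capacities, where Ford--Fulkerson need not terminate. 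Your closing remark that iterated augmentation yields an \emph{integral} maximum flow for integral capacities is also the feature the paper actually invokes (in the paragraph following the theorem and in the proof of Theorem \ref{thm:Menger}), so it is worth having spelled out.
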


If the capacity of every edge is a rational number, then the Ford-Fulkerson algorithm \cite{ford1956maximal} provides an efficient way to construct the max flow. Moreover, if all the capacities are integers, the max flow resulting from the Ford-Fulkerson algorithm also has integral values at every edge.

Thus in particular, when the capacity is $1$ on every edge, the maximum amount of flow is equal to the maximum number of edge disjoint directed paths from a source to a sink, and the max-flow min-cut theorem reduces to the directed Menger's theorem \cite{menger1927allgemeinen}. Furthermore, the undirected Menger's theorem can also be derived as a special case. Since we will mainly generalize this case to the quantum network, it is worth stating this theorem in more detail.

 Assume $G = (V,E)$ is an undirected graph with a specified partition $S \sqcup T$ of the set of degree $1$ vertices, where $S$ and $T$ are called sources $($or inputs$)$ and sinks $($or outputs$)$, respectively. Let $\MF(G)$ be the maximum number of edge disjoint paths in $G$ connecting a vertex in $S$ to a vertex in $T$, and let $\MC(G)$ be the minimum cardinality of all edge cut sets where an edge cut set is defined in the same way as in the case of directed graph. Note that here \lq\lq edge disjoint'' means that paths \emph{are} allowed to share vertices but not edges.

\begin{thm}\cite{menger1927allgemeinen}\cite{elias1956note}\cite{ford1956maximal}\label{thm:Menger}$[$Undirected Menger's Theorem$]\\$
Let $G =(V,E)$ be as above, then $\MF(G)$ $=$ $\MC(G)$.
\begin{proof}
Let $C$ be an edge cut set such that $|C| = $ $\MC(G)$. Since the removal of $C$ from $E$ disconnects $S$ from $T$, every path connecting some input to some output must contains at least one edge from $C$, and different edge disjoint paths contain different edges from $C$. Therefore $\MF(G)$ $\leq$ $\MC(G)$.

We turn $G$ into a directed graph $G'$ and use the max-flow min-cut theorem to prove $\MF(G) \geq \MC(G)$. Start with a new graph with the same set of vertices as $G$, but with no edges. For each edge $(i,j) \in E$, insert a pair of directed edges $(i,j), (j,i)$ to the new graph, then remove all the edges from the new graph which come into the set $S$ or leave the set $T$. Denote the resulting graph by $G'$. By construction $G'$ also has the inputs $S$ and outputs $T$. Define the capacity function $c$ on $G'$ to be the constant function $1$. Then by Theorem \ref{thm:classical max min} (or more precisely, the second paragraph below the theorem), the maximum number of edge disjoint directed paths $\MF(G';c)$ from $S$ to $T$ is equal to the minimum capacity $\MC(G';c)$ of edge cut sets. It is clear that $\MC(G)$ $= $ $\MC(G',c)$. We show below that $\MF(G)$ $\geq$ $\MF(G',c)$, which implies $\MF(G) \geq \MC(G)$.

Let $P$ denote the set of edge disjoint paths in $G'$ whose cardinality achieves the maximum number $\MF(G',c)$. Note that if the edges $(i,j),(j,i)$ both appear in a path $p \in P$, say $(u_1,u_2) \cdots (u_{k},i) (i,j) \cdots (j,i) (i,u_{r}) \cdots$, then we can just replace the path with a shorter one $(u_1,u_2) \cdots (u_{k},i)(i,u_{r}) \cdots$, which still connects an input to an output. Also note that if two paths in $P$ are of the form $p_1 = (u_1,u_2) \cdots (u_{k},i)(i,j)(j,u_{k+1}) \cdots , p_2 = (v_1,v_2) \cdots (v_{r},j)(j,i)(i,v_{r+1}) \cdots$, then we can replace them by $p_1' = (u_1,u_2) \cdots (u_{k},i)(i,v_{r+1}) \cdots , p_2' = (v_1,v_2) \cdots (v_{r},j)(j,u_{k+1}) \cdots$. By a sequence of the above operations, we can assume that for each pair of the edges $(i,j),(j,i)$ in $G'$, at most one of them appears in the collection of the edge disjoint paths $P$, and thus we can pick out the same number of edge disjoint paths in $G$. Therefore, $\MF(G)$ $\geq$ $\MF(G',c)$.


\end{proof}
\end{thm}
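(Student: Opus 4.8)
The plan is to prove the two inequalities $\MF(G) \le \MC(G)$ and $\MF(G) \ge \MC(G)$ separately, deriving the harder second inequality by reducing to the directed max-flow min-cut theorem (Theorem \ref{thm:classical max min}).

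For the upper bound, I would fix a minimum edge cut set $C$ with $|C| = \MC(G)$ together with any family of pairwise edge-disjoint $S$--$T$ paths. Because deleting $C$ disconnects $S$ from $T$, every path in the family must use at least one edge of $C$, and since the paths share no edges these chosen cut-edges are all distinct. Hence the family has at most $|C|$ members, so $\MF(G) \le \MC(G)$.

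For the reverse inequality I would pass to a directed model $G'$: keep the same vertex set, replace each undirected edge $\{i,j\}$ by the antiparallel pair of arcs $(i,j)$ and $(j,i)$, and then delete every arc pointing into $S$ or out of $T$ so that $S$ and $T$ remain pure sources and sinks. Assigning unit capacity to every arc and applying Theorem \ref{thm:classical max min}, the integrality of the Ford--Fulkerson optimum guarantees that the maximum number of edge-disjoint directed $S$--$T$ paths in $G'$ equals the directed min-cut $\MC(G',c)$. A direct comparison of cut sets shows $\MC(G) = \MC(G',c)$, so it remains only to convert a maximum family of directed edge-disjoint paths in $G'$ into an equally large family of edge-disjoint paths in the undirected graph $G$, which would give $\MF(G) \ge \MF(G',c) = \MC(G',c) = \MC(G)$.

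The main obstacle is precisely this last conversion, since a directed family may traverse both arcs of an antiparallel pair, a configuration that no edge-disjoint undirected family can realize. I would eliminate such pairs by local surgery. If a single path uses both $(i,j)$ and, later, $(j,i)$, I splice out the intervening segment to obtain a shorter path with the same endpoints. If two distinct paths use $(i,j)$ and $(j,i)$ respectively, I exchange their tails at that crossing, rerouting them to avoid the offending pair while preserving the endpoints. Each surgery keeps the number of paths fixed and strictly reduces the number of antiparallel pairs in simultaneous use, so after finitely many steps no undirected edge is traversed in both directions; the surviving arcs then project to a valid family of $\MF(G',c)$ edge-disjoint undirected paths, completing the proof.
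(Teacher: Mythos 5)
Your proposal is correct and follows essentially the same route as the paper's proof: the counting argument against a minimum cut for the upper bound, the reduction to the directed max-flow min-cut theorem via antiparallel unit-capacity arcs (with arcs into $S$ and out of $T$ deleted), and the same two local surgeries (splicing within one path, tail exchange between two paths) to eliminate simultaneous use of antiparallel arcs. Your explicit remark that each surgery strictly decreases the number of arcs in use, guaranteeing termination, is a small but welcome sharpening of the paper's ``by a sequence of the above operations.''
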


\section{Quantum max-flow min-cut theorem: Version I}
\label{sec:first}

We give a quantum analogue of the max-flow min-cut theorem where flow networks are replaced by tensor networks. The capacity on edges represents the dimension of a Hilbert space and the capacity of a tensor network thus behaves multiplicatively, instead of additively. After stating and proving the theorems, some additional context and applications will be given. Two versions of quantum $(\MF/\MC)$ will be provided in this section and next section, respectively.

Let $G = (\tilde{V},E)$ be a finite undirected graph with a set of inputs $S$ and a set of outputs $T$ such that $S \sqcup T$ is a disjoint partition of the set of degree $1$ vertices. Here it will be more convenient to assume $S$ and $T$ are not sets of vertices, but rather the open ends of some edges. Let $\tilde{V} = S \sqcup T \sqcup V$ be a partition of $\tilde{V}$. {\it Below, we will only call the elements in $V$ vertices}. For every vertex $v$ of degree $d_v$, we assume there is a local ordering $1, 2, \cdots, d_v$ of the ends of the edges incident to $v$. Let $e(v,1), e(v,2), \cdots, e(v,d_v)$ denote the edges incident to $v$ listed according to the local ordering. For each $u \in S \sqcup T,$ denote by $e(u)$ the edge whose open end is $u$. These edges are called input edges and output edges, respectively.

\begin{figure}
\begin{tikzpicture}[scale = 0.5]
\draw (0,0) -- (9,0) ;
\draw (0,3) -- (9,3) ;
\draw (0,1.5) -- (3,1.5) -- (6,3) ;
\draw (3,1.5) -- (6,0) ;

\draw (3,1.5) node{$\T_1$};
\draw (6,3) node{$\T_2$};
\draw (6,0) node{$\T_3$};

\draw (1,0) node{$2$};
\draw (1,1.5) node{$2$};
\draw (1,3) node{$2$};
\draw (8,0) node{$3$};
\draw (8,3) node{$3$};
\draw (4.5,0.75) node{$2$};
\draw (4.5,2.25) node{$2$};

\draw (-1,1.5) node{{\small{Input}}};
\draw (10,1.5) node{{\small{Output}}};
\end{tikzpicture}
\caption{A tensor network. The integer on each edge is the capacity of that edge. There are five open ends, three of which on the left form the input $S$ and the other two the output $T$. There are three vertices and they are assigned the tensor $\T_1, \T_2, \T_3$, respectively.}\label{fig:rk7 copy}
\end{figure}
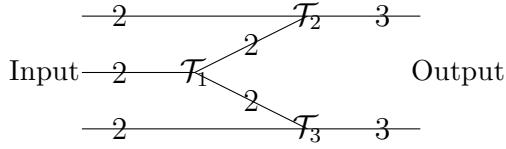

The graph $G$ now is a template for a tensor network. See Figure \ref{fig:rk7 copy}. To each edge $e$, we associate a Hilbert space $\C^{c_e}$, where $c: E \rightarrow \N, \, e \mapsto c_e$ acts as the quantum capacity. We fix a basis of $\C^{c_e}$, so that it allows us to freely raise and lower indices of the tensors to be introduced below. Now any assignment $v \mapsto \T_v$ taking each vertex $v$ to a tensor $\T_v \in \I^v:= \bigotimes\limits_{i=1}^{d_v} \C^{c_{e(v,i)}}$, which as totality can be written as $V \mapsto \T \in \I:= \bigoplus\limits_{v \in V} \I^v$, sends the graph $G$ to a tensor network, $G \mapsto N(G,c;\T)$. The linear ordering $1,2,\cdots, d_v$ specifies which index of $\T_v$ is identified with which edge end at $v$. As usual, graphical edges are interpreted as contraction of indices. Thus $N(G,c;\T)$ in turn determines an element $\alpha(G,c;\T) \in V_S \otimes V_T$, where $V_S := \otimes_{u \in S} \C^{c_{e(u)}} $ and $V_T = \otimes_{u \in T} \C^{c_{e(u)}} $. Using the standard basis in $\C^n$, this also determines an element $\beta(G,c;\T) \in V_S^{*} \otimes V_T = Hom(V_S, V_T)$. In a compact multi-index notation, we can write it as

$$\langle I_T | \beta(G,c;\T)|I_S\rangle = \sum\limits_{W \textrm{ extending } I_S,I_T} \prod \limits_{v \in V} \langle W_{|_v}|\T_v\rangle, $$
where $I_S$ and $I_T$ are multi-indices for input edges and output edges, $W$ is a multi-index for all edges and $W_{|_v}$ indicates the portion of $W$ which can be read at $v$.

With the notations above,
\begin{definition}
The quantum max-flow, $\QMF(G,c)$, is the the maximum rank of $\beta(G,c;\T)$ over all tensor assignments $\T$.
\end{definition}

An edge cut set $C$ is defined in the same way as in the classical case in Section \ref{sec:classical}. Namely, $C$ is a set of edges such that there exists a partition $\tilde{V} = \bar{S} \sqcup \bar{T}$ such that $S \subset \bar{S},\ T \subset \bar{T}$, and $C = \{(u,v) \in E: u \in \bar{S}, v \in \bar{T} \}$.

\begin{definition}
The quantum min-cut, $\QMC(G,c)$, is the minimum of $\prod \limits_{e \in C} c_e$, the quantum capacity, over all edge cut sets $C$.
\end{definition}


\begin{rem}\label{rem:ordering}
Although $N(G,c;\T),\alpha(G,c;\T) $ and $ \beta(G,c;\T)$ depend on the local ordering of the ends of edges incident to a vertex, we did not indicate this dependence in their notation to avoid prolixity. The quantity $\QMF(G,c)$, on the other hand, does not depend on the local ordering, since the tensors assigned to each vertex can be varied arbitrarily. By definition $\QMC(G,c)$ is also independent of the ordering. We will see in Section \ref{sec:second} that Version II of tensor networks depends on the local ordering in a critical way.
\end{rem}
\begin{rem}\label{rem:complexity}
$\QMC(G,c)$ can be calculated efficiently (that is, in polynomial time in the input's length) by running the efficient classical min cut algorithm with capacities $\log(c_1),\ldots,\log(c_n)$, and taking the exponent of the result. We do not know whether $\QMF(G,c)$ can be calculated efficiently.
\end{rem}

\begin{rem}
\label{re:replace input output} In a classical directed flow network, the max flow can change when the roles of the input $S$ and the output $T$ are replaced, but the max flow remains fixed when the flow network is undirected. This is also the case for quantum max flow: the roles of the inputs and the outputs can be interchanged without changing the quantum max flow; this follows from the equality of the dimensions of the image and the coimage.
\end{rem}


The functorial nature of tensor network immediately implies the following lemma.

\begin{lem}
\label{lem:functorial}
Let $G = (\tilde{V},E), \ S, \ T, \ c,\ \T$ be as above, and let $V_C = \bigotimes\limits_{e \in C} \C^{c_e}$, where $C$ is an edge cut set. Then $\beta(G,c;\T) \in Hom(V_S,V_T)$ factors as $\beta_2 \circ \beta_1$, where $\beta_1 \in Hom(V_S, V_C)$ and $\beta_2 \in Hom(V_C, V_T)$.
\begin{proof}
Let $\tilde{V} = \bar{S} \sqcup \bar{T}$ be the partition of $\tilde{V}$ such that, $S \subset \bar{S}, T \subset \bar{T}$, and the set of edges between $\bar{S}$ and $\bar{T}$ form the cut set $C$. Delete an interior point on each edge in $C$ so that each edge is split into two edges, each with one open end. Let $M$ be the set of the deleted points and let $G_1$ $($resp. $G_2$$)$ be the components of $G$, which contain $\bar{S}$ $($resp. $\bar{T}$$)$. Then $G_1$ is a graph with input $S$ and output $M$, and $G_2$ is a graph with input $M$ and output $T$. Also let $c_i$ and $\T_i$ be the restriction of $c$ and $ \T$ on $G_i$, respectively, $i=1,2$. Then it follows that $\beta(G,c;\T) = \beta(G_2,c_2; \T_2) \circ \beta(G_1,c_1;\T_1),$ and $\beta(G_1,c_1;\T_1) \in Hom(V_S,V_C), \beta(G_2,c_2;\T_2) \in Hom(V_C, V_T)$.
\end{proof}
\end{lem}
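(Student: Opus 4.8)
The plan is to exploit exactly the compositional (functorial) structure of tensor-network contraction: a cut in the graph should correspond to a factorization of the associated linear map. Concretely, I would begin with the partition $\tilde V = \bar S \sqcup \bar T$ furnished by the cut set $C$, with $S \subset \bar S$, $T \subset \bar T$, and $C$ the collection of edges running between $\bar S$ and $\bar T$. The first step is to physically ``cut'' the network: delete an interior point of each edge $e \in C$, splitting $e$ into two half-edges, each carrying one new open end. Collecting these new open ends gives a set $M$ in bijection with $C$, and the graph falls into two pieces $G_1$ (containing $\bar S$) and $G_2$ (containing $\bar T$). By construction $G_1$ is a template with input $S$ and output $M$, and $G_2$ is a template with input $M$ and output $T$; restricting $c$ and $\T$ to each piece gives $\beta_1 := \beta(G_1,c_1;\T_1) \in \Hom(V_S,V_C)$ and $\beta_2 := \beta(G_2,c_2;\T_2) \in \Hom(V_C,V_T)$, where the fixed basis on each cut edge identifies both the output space of $G_1$ and the input space of $G_2$ with $V_C = \bigotimes_{e \in C} \C^{c_e}$.

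The heart of the argument is then to verify $\beta(G,c;\T) = \beta_2 \circ \beta_1$ directly from the index formula. Here I would partition the master multi-index $W$ over all edges into three groups: the edges internal to $\bar S$, the edges internal to $\bar T$, and the cut edges $C$ (whose index I call $I_C$). Because every vertex of $V$ lies entirely in $\bar S$ or entirely in $\bar T$ --- no vertex straddles the cut --- the product $\prod_{v\in V}\langle W_{|_v}|\T_v\rangle$ splits as a product of a factor involving only the $\bar S$-vertices and a factor involving only the $\bar T$-vertices. Summing over the $\bar S$-internal edges produces precisely $\langle I_C|\beta_1|I_S\rangle$, summing over the $\bar T$-internal edges produces $\langle I_T|\beta_2|I_C\rangle$, and the remaining sum over the shared index $I_C$ is exactly the matrix composition
$$\langle I_T|\beta_2\circ\beta_1|I_S\rangle = \sum_{I_C}\langle I_T|\beta_2|I_C\rangle\,\langle I_C|\beta_1|I_S\rangle,$$
which yields the claimed factorization.

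The only genuinely delicate point --- and the step I would write out most carefully --- is the bookkeeping that justifies this clean split of the vertex product. One must verify that each vertex end, hence each index appearing in $W_{|_v}$, is correctly assigned to exactly one of the three edge groups, and that the two half-edge indices on either side of a cut edge are contracted against the same summation variable $I_C$; this is precisely where fixing a basis on $\C^{c_e}$ is used, so that raising and lowering the index on a cut edge is unambiguous and the two half-edges glue back to the original contraction. Once this is arranged, the factorization is forced by the distributivity of the sum over a product indexed by a disjoint union of edges. No rank estimate enters the lemma itself; it is purely the statement that gluing of networks corresponds to composition of the associated homomorphisms, which is exactly what makes the quantum min-cut an upper bound for the quantum max-flow in the subsequent development.
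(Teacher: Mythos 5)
Your proposal is correct and follows essentially the same route as the paper's proof: cut an interior point of each edge in $C$, split $G$ into $G_1$ (input $S$, output $M$) and $G_2$ (input $M$, output $T$), and identify $\beta(G,c;\T)$ with the composition $\beta(G_2,c_2;\T_2)\circ\beta(G_1,c_1;\T_1)$. The only difference is that you spell out the index-summation bookkeeping that the paper compresses into ``it follows that,'' which is a welcome but not substantively different addition.
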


The Corollary below shows a basic property of tensor networks that any cut of the network provides an upper bound for the maximal rank, which is well known in possibly different forms in literature. For instance, if one views the map $\beta(G,c;\T)$ as an unnormalized state in $\in V_S \otimes V_T$, namely as $\alpha(G,c;\T)$, then the entropy of entanglement between $S$ and $T$ is upper bounded by the logarithm of the rank, and thus the entropy is also upper bounded by the logarithm of the min-cut which is just an upper bound version of the area law \cite{orus2014}. See also Lemma \ref{lem:ee<max}.

\begin{cor}
\label{lem:max<min}
Given a finite graph $G$ with the quantum capacity function $c$, then $\QMF(G,c)$ $\leq $ $\QMC(G,c)$.
\begin{proof}
For any tensor assignment $\T$ and any edge cut set $C$, by Lemma \ref{lem:functorial}, $\beta(G,c;\T) = \beta_2 \circ \beta_1$, where $\beta_1 \in Hom(V_S, V_C)$. Thus $rank(\beta(G,c;\T)) \leq rank(\beta_1) \leq dim(V_C)$.
\end{proof}
\end{cor}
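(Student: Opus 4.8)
The plan is to deduce this directly from the factorization Lemma \ref{lem:functorial}, which does essentially all of the work. First I would fix an arbitrary tensor assignment $\T$ and an arbitrary edge cut set $C$, and set $V_C = \bigotimes_{e \in C} \C^{c_e}$. By Lemma \ref{lem:functorial}, the induced map $\beta(G,c;\T) \in \Hom(V_S,V_T)$ factors as $\beta_2 \circ \beta_1$ with $\beta_1 \in \Hom(V_S, V_C)$ and $\beta_2 \in \Hom(V_C, V_T)$. The whole point is that the internal ``bandwidth'' of this factorization is controlled by $\dim(V_C)$, which equals the product of the capacities along the cut.

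Next I would invoke the elementary linear-algebra fact that the rank of a composition cannot exceed the rank of either factor, so $\rk(\beta_2 \circ \beta_1) \leq \rk(\beta_1)$. Since $\beta_1$ takes values in $V_C$, its rank is at most $\dim(V_C) = \prod_{e \in C} c_e$. Combining these gives $\rk(\beta(G,c;\T)) \leq \prod_{e \in C} c_e$. Crucially, this holds for every cut set $C$, so I may take the minimum over all $C$ on the right-hand side to obtain $\rk(\beta(G,c;\T)) \leq \QMC(G,c)$.

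Finally, the right-hand side is independent of $\T$, so taking the maximum of the left-hand side over all tensor assignments $\T$ yields $\QMF(G,c) \leq \QMC(G,c)$. Because the substance is packaged in the factorization lemma, there is no genuine obstacle here; the only points requiring care are that the rank inequality $\rk(\beta_2 \circ \beta_1) \leq \rk(\beta_1)$ holds regardless of what $\beta_2$ is (so the bound survives the maximization over tensors), and that one is free to use the cut realizing the minimum capacity, which is exactly what converts the per-cut bound into the $\QMC$ bound.
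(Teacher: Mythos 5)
Your argument is correct and is essentially identical to the paper's proof: both factor $\beta(G,c;\T) = \beta_2 \circ \beta_1$ through $V_C$ via Lemma \ref{lem:functorial}, bound the rank by $\rk(\beta_1) \leq \dim(V_C) = \prod_{e \in C} c_e$, and then quantify over all cuts and all tensor assignments. You simply spell out the final minimization/maximization step more explicitly than the paper does.
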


In general, we do not know a necessary and sufficient condition for the inequality in Corollary \ref{lem:max<min} to become an equality. Theorem \ref{thm:max=min}, however, states that if the quantum capacity at all edges has a uniform tensor product structure, then the quantum version of max-flow min-cut theorem holds. The authors in \cite{pastawski2015} had a similar result (Theorem $2$ in \cite{pastawski2015}) for a more restricted class of tensor networks, where tensors are all perfect tensors, the quantum capacity is the same on all edges, and the underlying graph is required to have \lq\lq non-positive curvature''. Our result does not have any requirement on tensors and the shape of the graph.

\begin{thm}
\label{thm:max=min}
$[$Quantum Max-flow Min-cut Theorem$]\\$
Let $G = (\tilde{V},E)$ be as above. If there exists an integer $d > 0$, such that the capacity function $c$ at each edge is a power of $d$, then $\QMF(G,c)$ $= $ $\QMC(G,c)$.
\begin{proof}

For each edge $e$, let $m_e = \log_d c_e$. Let $G' = (\tilde{V}, \varnothing)$ be a graph with no edges. Then for each edge $e = (u,v)$ in $G$ with capacity $c_e$, if both $u$ and $v$ are vertices, we insert $m_e$ parallel edges connecting $u$ to $v$ in $G'$. If $u($resp. $v)$ is an open end, we insert in $G'$ $m_e$ open edges all incident to $v($resp. $u)$. Denote the resulting graph still by $G'$, and define the capacity function $c'$ to have value $d$ on each edge of $G'$. It can be seen that there is a one-to-one correspondence between the tensor assignments in $G$ and tensor assignments in $G'$ such that the the resulting linear maps have the same rank. Moreover, a direct consequence of the definition of $G'$ is that $\QMC(G,c)$ = $\QMC(G',c')$. Therefore, if the theorem holds for $(G',c')$, it also holds for $(G,c)$. Thus, without loss of generality, we assume that the capacity of each edge in $G$ is equal to $d$.


Let $M =$ $\QMC(G,c)$. We give an explicit tensor assignments so that the resulting linear map has rank equal to $M$. Viewed as a classical network, there are $\log_d M$ edge disjoint paths in $G$ from the sources to the sinks by Theorem \ref{thm:Menger}. Denote these paths by $p_1, p_2, \cdots$. For each vertex $v$ in $G$, the tensor $\T_v$ is assigned $1$ if and only if the following rules are satisfied for the indices of edges incident to $v$:
\begin{enumerate}
 \item If two edges are adjacent on one of the paths $p_i\,'$s, then they have the same index.
 \item An edge which does not belong to any $p_i\,'$s has index $1$, (the first index in the index set $\{1,\cdots, d\}$).
\end{enumerate}

\noindent $T_v$ is assigned $0$ otherwise. It is clear that for the contraction of the tensor network to be non-zero, which must be $1$ actually, the indices on all edges of a path $p_i$ must be the same, and indices on the edges which do not belong to any $p_i\,'$s must be $1$. Thus there are in total $d^{\log_d (M)}$ configurations of indices which make the contraction equal $1$. Therefore after an appropriate ordering of the basis elements in $V_S$ and in $V_T$, respectively, the map $\beta(G,c;\T)$ is of the form shown in Equation \ref{mat:beta}, where the dimension of the upper left block is $M \times M$. Hence, $\QMF(G,c)$ $\geq$  $\QMC(G,c)$. Combining with Corollary \ref{lem:max<min}, we have $\QMF(G,c)$ $=$  $\QMC(G,c)$.

\begin{equation}\label{mat:beta}
\left(
\begin{array}{ccc|ccc}
1 &             &   & \multicolumn{3}{c}{\multirow{3}{*}{\huge{0}}}\\
  & \ddots      &   & \multicolumn{3}{c}{\multirow{3}{*}{}} \\
  &             & 1 & \multicolumn{3}{c}{\multirow{3}{*}{}} \\  \hline
\multicolumn{3}{c}{\multirow{3}{*}{\huge{0}}} \vline& \multicolumn{3}{c}{\multirow{3}{*}{\huge{0}}} \\
\multicolumn{3}{c}{\multirow{3}{*}{}}         \vline& \multicolumn{3}{c}{\multirow{3}{*}{}}\\
\multicolumn{3}{c}{\multirow{3}{*}{}}         \vline& \multicolumn{3}{c}{\multirow{3}{*}{}}\\
\end{array}
\right)
\end{equation}

\end{proof}
\end{thm}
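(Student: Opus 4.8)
The plan is to establish the nontrivial inequality $\QMF(G,c) \geq \QMC(G,c)$, since the reverse inequality is already supplied by Corollary \ref{lem:max<min}. My first step is a reduction to the uniform case in which every edge has capacity exactly $d$. Writing $c_e = d^{m_e}$, I would build an auxiliary graph $G'$ by replacing each edge $e$ of capacity $d^{m_e}$ by $m_e$ parallel edges of capacity $d$, splitting any open end correspondingly. Since $\C^{d^{m_e}} \cong (\C^d)^{\otimes m_e}$, regrouping tensor factors gives a rank-preserving bijection between tensor assignments on $G$ and on $G'$, so $\QMF(G,c) = \QMF(G',c')$; and because a logarithm turns the multiplicative cut value into the additive classical cut value, the splitting preserves the min-cut, $\QMC(G,c) = \QMC(G',c')$. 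Hence it suffices to treat $G$ with all capacities equal to $d$.

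In the uniform case, set $M = \QMC(G,c)$, so that $\log_d M$ is exactly the value of the classical min-cut of the underlying unit-capacity graph. By the undirected Menger's theorem (Theorem \ref{thm:Menger}) there exist $\log_d M$ pairwise edge-disjoint paths $p_1, p_2, \ldots$ joining the inputs $S$ to the outputs $T$. The heart of the argument is to exhibit a single tensor assignment whose contraction already has rank $M$, thereby saturating the bound. I would define each vertex tensor $\T_v$ to take the value $1$ precisely on those index configurations of its incident edge-ends for which (i) any two edge-ends that are adjacent along some path $p_i$ carry equal indices, and (ii) every edge-end not lying on any path carries the first basis index; on all other configurations $\T_v$ is set to $0$.

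The remaining work is to verify that this assignment is consistent and that the resulting map $\beta(G,c;\T)$ has rank exactly $M$. Consistency holds because the paths are edge-disjoint: at any vertex the incident path-edges group into disjoint in/out pairs belonging to distinct paths, so the matching constraints in (i) never conflict and $\T_v$ is well defined. For the rank computation, one checks from the contraction formula that a global index assignment $W$ contributes a (necessarily unit) nonzero term to $\langle I_T|\beta(G,c;\T)|I_S\rangle$ if and only if the index is constant along each path $p_i$ and equals the first basis index on every edge off all paths; the boundary indices $I_S$ and $I_T$ are then forced to match through the paths, while the remaining input and output edges are pinned to the first basis index. After reordering the standard bases of $V_S$ and $V_T$, the matrix of $\beta(G,c;\T)$ therefore acquires the block form of Equation \ref{mat:beta} with an $M \times M$ identity block in the upper-left corner and zeros elsewhere, so $\rk \beta(G,c;\T) = M$. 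This yields $\QMF(G,c) \geq M = \QMC(G,c)$, which together with Corollary \ref{lem:max<min} gives equality. I expect the main obstacle to be precisely this last verification: confirming that the tensor is well defined at vertices shared by several paths, and that no unintended index configuration survives the contraction to inflate the rank beyond the diagonal block.
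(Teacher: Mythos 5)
Your proposal is correct and follows essentially the same route as the paper's proof: the same reduction to uniform capacity $d$ via parallel-edge splitting, the same use of Menger's theorem to extract $\log_d M$ edge-disjoint paths, and the same $0/1$ "index-copying along paths" tensor assignment producing the $M\times M$ identity block of Equation \ref{mat:beta}. Your added remarks on well-definedness at shared vertices and on which configurations survive the contraction are just explicit versions of steps the paper leaves implicit.
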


To summarize, we used the classical solution to find an exceedingly simple list of tensors $\T$, all entries of which are $0$ or $1$, which provide a solution instance to the quantum problem. However, the following proposition shows that as long as one can find one solution, almost all choices of tensors $\T$ are also solutions.

\begin{prop}
\label{prop:open dense}
Let $G = (\tilde{V},E)$ be as above with the capacity function $c$. Then the set of all tensors $\T \in \I:= \bigoplus\limits_{v \in V} \I^v$ such that $rank(\beta(G,c;\T))$ is equal to $\QMF(G,c)$, is an open dense subset of $\I$.
\begin{proof}
Let $M = $ $\QMF(G,c)$. Consider the general assignment $\T \in \I$ of tensors to vertices $\T = \{\T_v: v \in V\}$. The condition that $rank(\beta(G,c;\T))< M$ is equivalent to the vanishing condition of a set of polynomials $\{P_{\alpha}\}$, each of which is the determinant of some $M \times M$ minor of $\beta(G,c;\T)$. Thus the set of all tensors $\T$ which give $rank(\beta(G,c;\T)) < M$ is a proper affine algebraic variety, the complement of which is an open dense subset of $\I$ \cite{hartshorne1977algebraic}.

\end{proof}
\end{prop}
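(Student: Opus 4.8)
The plan is to show that the set of tensors achieving the maximal rank is open and dense by realizing it as the complement of a proper algebraic variety. First I would fix $M = \QMF(G,c)$, which by definition is the maximum over all $\T \in \I$ of $\rk(\beta(G,c;\T))$. The key observation is that each entry $\langle I_T|\beta(G,c;\T)|I_S\rangle$ is, by the contraction formula in the excerpt, a polynomial in the components of the tensors $\T_v$: it is a sum over internal configurations $W$ of products $\prod_{v} \langle W_{|_v}|\T_v\rangle$, and each such product is multilinear in the entries of the various $\T_v$. Hence every entry of the matrix $\beta(G,c;\T)$ is a polynomial function of the coordinates of $\I$, and so is every $M\times M$ minor of $\beta$, being a determinant of a submatrix with polynomial entries.

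Next I would recall the standard linear-algebra fact that a matrix has rank at least $M$ if and only if at least one of its $M\times M$ minors is nonzero, and rank strictly less than $M$ if and only if all of its $M\times M$ minors vanish. Applying this to $\beta(G,c;\T)$, the bad set
\[
Z = \{\T \in \I : \rk(\beta(G,c;\T)) < M\}
\]
is exactly the common zero locus of the finite collection of polynomials $\{P_\alpha\}$, where $P_\alpha$ ranges over the determinants of all $M\times M$ minors of $\beta(G,c;\T)$. Thus $Z$ is a Zariski-closed (hence closed in the usual topology) algebraic subset of the affine space $\I$, and its complement is precisely the set of $\T$ achieving rank $M$, which is therefore (relatively) open.

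To finish I must argue that $Z$ is a \emph{proper} subvariety, equivalently that its complement is nonempty; density of a nonempty Zariski-open subset of an irreducible affine space then follows from the standard fact that an irreducible variety is not a finite union of proper closed subsets (cf. \cite{hartshorne1977algebraic}). Nonemptiness is immediate from the definition of $M$ as a maximum: by the definition of $\QMF$ there exists at least one assignment $\T_0$ with $\rk(\beta(G,c;\T_0)) = M$, so $\T_0 \notin Z$ and $Z \neq \I$. Since $\I$ is an affine space over $\C$, it is irreducible, so the complement of the proper closed set $Z$ is open and dense, completing the proof.

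The main obstacle is not any single deep step but rather making precise the claim that the matrix entries of $\beta(G,c;\T)$ are genuinely polynomial in the tensor coordinates, as this is what licenses the algebraic-geometry machinery; once the contraction sum is recognized as a polynomial expression, everything else is routine. I would also want to be careful that the variety is over $\C$ so that irreducibility of affine space and the equivalence of Zariski-closed with being cut out by polynomials both apply cleanly.
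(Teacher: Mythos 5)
Your proposal is correct and follows essentially the same route as the paper's proof: both identify the locus of non-maximal rank as the common zero set of the $M\times M$ minors of $\beta(G,c;\T)$, which are polynomials in the tensor coordinates, and conclude that this proper subvariety has open dense complement in the affine space $\I$. You supply some details the paper leaves implicit (polynomiality of the matrix entries, properness via the existence of a rank-$M$ assignment, irreducibility of affine space), but the argument is the same.
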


\begin{prop}
Let $G = (\tilde{V},E)$ be a tensor network with the capacity function $c:E \rightarrow \N$, and let $G' = (\tilde{V},E)$ be the classical network with the same underlying graph as $G$, but with the capacity function $c' = \log_d c: E \rightarrow \R^{+}$, for some $d > 0$. Direct $G'$ such that $G'$ has the same set of inputs and outputs as $G$, and $\MC(G',c') = \log_d \QMC(G,c)$. If $G'$ has a max flow with the flow amount at each edge being the $\log_d$ of some integer, and there is no loop in $G'$ such that the flow on each edge of the loop is non-zero, then $\QMF(G,c)=\QMC(G,c)$.

\begin{proof}
Choose a max flow $f: E \rightarrow \R^{+}$ for $G'$ as stated in the proposition. Consider the tensor network $G'' = (\tilde{V},E)$ with the capacity function $c'' = d^f: E \rightarrow \N.$ $G''$ has the same underline graph as $G'$, and in particular, $G''$ is directed. Then since $c''(e) = d^{f(e)} \leq d^{c'(e)} = c(e)$, we have $\QMF(G'',c'')\leq \QMF(G,c)$. On the other hand, $\QMC(G'',c'') = d^{\MC(G',c')} = \QMC(G,c)$. Hence to prove $\QMF(G,c) = \QMC(G,c)$, it suffices to show $\QMF(G'',c'') = \QMC(G'',c'')$.

By construction, $G''$ has the property that at each vertex, the product of the capacities on the incoming edges is equal to that of the capacities on the outgoing edges. Moreover, the set of edges associated to the sources is a min-cut set. Thus, to each vertex one can associate a linear isomorphism from the space of the incoming edges to the space of the outgoing edges. Since there is no loop in $G''$, the resulting linear map of the tensor network is simply a composition of linear isomorphisms, and hence is an isomorphism from the input space to the output space. Therefore, $\QMF(G'',c'') = \QMC(G'',c'')$.
\end{proof}
\end{prop}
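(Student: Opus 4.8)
The plan is to establish the nontrivial inequality $\QMF(G,c) \ge \QMC(G,c)$ by exhibiting one explicit tensor assignment of full rank; the reverse inequality $\QMF \le \QMC$ is already furnished by Corollary~\ref{lem:max<min}. The guiding idea is that the prescribed max flow $f$ on $G'$ records, after exponentiation, a multiplicatively balanced ``sub-capacity'' on each edge, and that a balanced directed network admits a tensor assignment built entirely out of local isomorphisms.

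First I would pass to a reduced tensor network $G'' = (\tilde{V}, E)$ carrying the capacity function $c'' = d^{f}$. Because $f$ is feasible for $(G',c')$ we have $f(e) \le c'(e) = \log_d c_e$, hence $c''_e = d^{f(e)} \le c_e$ and $\C^{c''_e}$ sits inside $\C^{c_e}$ as a coordinate subspace. Any tensor assignment for $G''$ can thus be regarded as one for $G$ (padding the new indices with zeros) without changing the rank of the contracted map, giving $\QMF(G'',c'') \le \QMF(G,c)$. On the cut side, I would note that $f$ saturates every edge of $G''$ (by construction $\log_d c''_e = f(e)$), so the source edges form a saturated, hence minimal, cut of value equal to the flow value $\MC(G',c')$; invoking the classical Max-flow Min-cut Theorem~\ref{thm:classical max min} together with the hypothesis $\MC(G',c') = \log_d \QMC(G,c)$ then yields $\QMC(G'',c'') = d^{\MC(G',c')} = \QMC(G,c)$. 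It therefore suffices to prove $\QMF(G'',c'') = \QMC(G'',c'')$.

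The heart of the argument is the construction of a full-rank assignment on $G''$. Flow conservation $\sum_{\mathrm{in}} f(e) = \sum_{\mathrm{out}} f(e)$ at each internal vertex $v$ exponentiates to the statement that the product of the capacities $c''$ on the incoming edges equals the product on the outgoing edges; consequently the incoming and outgoing tensor factors at $v$ have equal dimension, and I may choose $\T_v$ to be any linear isomorphism between them. Global conservation at the boundary likewise gives $\dim V_S = \dim V_T = \QMC(G,c)$ in the $G''$ network. What remains is to see that contracting this network of local isomorphisms produces a global isomorphism $V_S \to V_T$, whose rank is then exactly $\dim V_S = \QMC(G,c)$.

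This last step is where the no-loop hypothesis is essential, and it is the point I expect to be the main obstacle. Edges carrying zero flow have $c''_e = 1$ and contribute only trivial $\C^1$ factors, so they may be deleted; what remains is the subgraph of flow-bearing edges, which by hypothesis contains no directed cycle and is therefore a directed acyclic graph. Choosing a topological order $v_1,\dots,v_n$ on its vertices, I would argue that the tensor contraction unwinds as an honest composition $\T_{v_n} \circ \cdots \circ \T_{v_1}$ of the local isomorphisms (each outgoing edge of an earlier vertex being the incoming edge of a later one), so that $\beta(G'',c'';\T)$ is itself an isomorphism. The care needed here lies precisely in ruling out cycles: a directed cycle of flow-bearing edges would force part of the contraction to be a partial trace rather than a composition, and the resulting map could drop rank --- exactly the pathology that the loop-free assumption excludes. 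Assembling the pieces gives $\QMF(G,c) \ge \QMF(G'',c'') = \QMC(G'',c'') = \QMC(G,c)$, which with Corollary~\ref{lem:max<min} completes the proof.
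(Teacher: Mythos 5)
Your proposal is correct and follows essentially the same route as the paper's own proof: pass to the reduced network $G''$ with capacities $c''=d^{f}$, observe $\QMF(G'',c'')\leq \QMF(G,c)$ and $\QMC(G'',c'')=\QMC(G,c)$, then use flow conservation to assign a local isomorphism at each vertex and the no-loop hypothesis to conclude the contraction composes to a global isomorphism. Your added remarks on deleting the capacity-$1$ edges, taking a topological order, and why a directed cycle would force a rank-dropping partial trace simply make explicit what the paper leaves implicit.
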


\begin{prop}\label{prop:lower_bound}[Lower bound on capacity.]
If a graph $G$ has edges of capacity $d_\text{min} \leq \cdots \leq d_\text{max}$ and a classical min cut of cardinality $C$, then the quantum capacity satisfies the lower bound: $\QMF(G, c) \geq d^C_\text{min}$.
\end{prop}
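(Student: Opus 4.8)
The plan is to reduce the statement to the already-established Quantum Max-flow Min-cut Theorem (Theorem \ref{thm:max=min}) by replacing every edge capacity with the uniform value $d_\text{min}$. Concretely, I would introduce the modified capacity function $c'$ defined by $c'_e = d_\text{min}$ for every edge $e$. Since $d_\text{min}$ is trivially a power of itself ($d_\text{min} = d_\text{min}^1$), the hypothesis of Theorem \ref{thm:max=min} is satisfied for the network $(G,c')$, so $\QMF(G,c') = \QMC(G,c')$. With every edge carrying the same capacity $d_\text{min}$, the quantum capacity of an edge cut set $C'$ is $\prod_{e \in C'} d_\text{min} = d_\text{min}^{|C'|}$; minimizing over all cut sets (and using $d_\text{min} \ge 1$, so that minimizing $d_\text{min}^{|C'|}$ is the same as minimizing $|C'|$) shows $\QMC(G,c') = d_\text{min}^{C}$, where $C$ is precisely the cardinality of a classical unweighted min cut. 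Thus $\QMF(G,c') = d_\text{min}^{C}$.

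The second ingredient is monotonicity of the quantum max-flow under a pointwise increase of capacities: since $c'_e = d_\text{min} \le c_e$ for all $e$, I would argue $\QMF(G,c') \le \QMF(G,c)$. The reason is that any tensor assignment $\T'$ for $(G,c')$ embeds into a tensor assignment $\T$ for $(G,c)$ by including each factor $\C^{c'_e} = \C^{d_\text{min}}$ into $\C^{c_e}$ along the first coordinates and padding the remaining entries with zeros. Under this embedding $\beta(G,c;\T)$ is exactly $\beta(G,c';\T')$ bordered by zero rows and columns, hence has the same rank. Combining the two ingredients yields $\QMF(G,c) \ge \QMF(G,c') = d_\text{min}^{C}$, which is the claim.

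Alternatively, one can prove the bound directly by mimicking the construction in the proof of Theorem \ref{thm:max=min}: by Menger's Theorem (Theorem \ref{thm:Menger}) there exist $C$ edge-disjoint paths $p_1,\dots,p_C$ from inputs to outputs, and one assigns to each vertex the $0/1$ tensor that forces equal indices (ranging only over $\{1,\dots,d_\text{min}\}$) on consecutive edges lying on a common path, and index $1$ on every edge belonging to no path. Contracting then forces each path to carry an independent $d_\text{min}$-dimensional signal, producing a $d_\text{min}^{C} \times d_\text{min}^{C}$ identity block in $\beta(G,c;\T)$ after reordering the bases of $V_S$ and $V_T$, exactly as in Equation \ref{mat:beta}.

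I expect the only delicate point to be the bookkeeping at vertices through which several of the edge-disjoint paths pass simultaneously: one must check that the edge-disjointness of the paths makes the per-path index constraints independent, so that the signals do not interfere and the identity block attains the full size $d_\text{min}^{C}$ rather than something smaller. In the reduction-based argument this subtlety is entirely absorbed into the cited Theorem \ref{thm:max=min}, so I would present the reduction via $c'$ as the main proof and regard the monotonicity statement as the one step genuinely requiring verification.
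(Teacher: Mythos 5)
Your reduction-via-$c'$ argument is essentially the paper's own proof: the paper restricts each edge space to a $d_\text{min}$-dimensional subspace (with tensors vanishing on the orthogonal complement, which is exactly your zero-padding monotonicity step) and then applies Theorem \ref{thm:max=min} to the resulting uniform-capacity network, whose min-cut is $d_\text{min}^{C}$. The proposal is correct and matches the intended argument.
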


\begin{proof}
We may always restrict to a subspace $V$ of any edge space and correspondingly restrict to tensors which vanish when input at any index is from the orthogonal complement $V^\bot$.  Doing this and applying Theorem \ref{thm:max=min} give the result.
\end{proof}

\begin{note}
A somewhat better lower bound can be obtained by \lq\lq thinning'' $G$ to $\widetilde{G}$ by reducing, for each edge $e$,  the capacity (i.e. dimension) $d_e$ to the largest power $d_\text{min}^{\phi_e} \leq d_e$ and then computing the $\QMF(\widetilde{G}, c) = \QMC(\widetilde{G}, c)$ of the thinned graph $\widetilde{G}$.
\end{note}

Before presenting any examples, we need a technical lemma where one can found the proof in \cite{dur2000three} or an independent proof in Appendix \ref{sec:proof lemma dense}.

\begin{lem}\cite{dur2000three}\label{lem:map dense}
Let $U,V,W$ be vector spaces isomorphic to $\C^2$. Then the set of linear maps $\Phi: U \longrightarrow V \otimes W$, which can be written in the form $|1\rangle_U \mapsto |1\rangle_V\otimes|1\rangle_W$, $|2\rangle_U \mapsto |2\rangle_V\otimes|2\rangle_W$ under appropriate bases of $U,V$ and $W$, is an open dense subset of $Hom(U, V \otimes W)$.
\end{lem}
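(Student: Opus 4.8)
The plan is to identify the described set with the complement of a single polynomial hypersurface in $\Hom(U,V\otimes W)\cong\C^{8}$, which makes both openness and density transparent. First I would fix standard bases $\{\ket{1},\ket{2}\}$ on each of $U,V,W$ and record that the maps in question are precisely those in the orbit of the standard map $\Phi_0\colon \ket{1}_U\mapsto\ket{11},\ \ket{2}_U\mapsto\ket{22}$ under $\Gamma:=GL(U)\times GL(V)\times GL(W)$ acting by $(g_U,g_V,g_W)\cdot\Phi=(g_V\otimes g_W)\,\Phi\,g_U^{-1}$; a choice of ``appropriate bases'' is exactly a choice of group element carrying $\Phi_0$ to $\Phi$.

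I would then encode $\Phi$ by the two matrices $M_1,M_2\in\mathrm{Mat}_2\cong V\otimes W$ given by $\Phi(\ket{1}_U),\Phi(\ket{2}_U)$, and form the binary quadratic $q_\Phi(s,t)=\det(sM_1+tM_2)$. Let $\Delta(\Phi)$ denote its discriminant, a polynomial in the eight entries of $\Phi$ (up to scalar, Cayley's $2\times2\times2$ hyperdeterminant). The heart of the argument is to show the described set equals $\{\Phi:\Delta(\Phi)\neq0\}$.

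For the inclusion $\{\Delta\neq0\}\subseteq\text{set}$ (the constructive direction): if $\Delta(\Phi)\neq0$ then $q_\Phi\not\equiv0$, so $M_1,M_2$ are independent and $\Phi$ is injective, and $q_\Phi$ has two distinct roots $[s_i:t_i]$. The matrices $R_i=s_iM_1+t_iM_2$ are singular and nonzero, hence of rank exactly one, say $R_i=f_i\,g_i^{T}$ with $f_i\in V,\ g_i\in W$; being non-proportional points of the pencil they span $\operatorname{im}\Phi$. I would then check that $\{f_1,f_2\}$ and $\{g_1,g_2\}$ are each linearly independent: were, say, $f_1\parallel f_2$, then every matrix in $\spn(R_1,R_2)=\operatorname{im}\Phi$ would have rank $\le 1$, forcing $q_\Phi\equiv0$, a contradiction. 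Taking $\{f_1,f_2\}$ and $\{g_1,g_2\}$ as bases of $V,W$ and $\{\Phi^{-1}R_1,\Phi^{-1}R_2\}$ as a basis of $U$ then exhibits $\Phi$ in the standard form $\ket{i}_U\mapsto\ket{i}_V\otimes\ket{i}_W$. For the reverse inclusion I would use that $\Delta$ is a relative invariant of $\Gamma$ (it scales by a nonzero constant depending on $\det g_U,\det g_V,\det g_W$) together with $\Delta(\Phi_0)\neq0$ (there $q_{\Phi_0}=st$), so every map in the orbit has $\Delta\neq0$.

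Finally $\{\Delta\neq0\}$ is open because $\Delta$ is a polynomial, and dense because its complement is the zero locus of a nonzero polynomial, a proper subvariety with empty interior. I expect the main obstacle to be the constructive direction, specifically verifying that nonvanishing of the discriminant really produces two honest rank-one summands with linearly independent factors, since this is exactly where the generic (GHZ-type) versus exceptional (W-type) dichotomy of three-qubit states enters. A purely group-theoretic alternative would be to compute that the stabilizer of $\Phi_0$ in $\Gamma$ is $4$-dimensional, so the orbit has dimension $12-4=8=\dim_\C\Hom(U,V\otimes W)$; being an orbit it is locally closed and irreducible, and an $8$-dimensional locally closed irreducible subset of the irreducible $8$-dimensional space $\C^8$ is automatically open and dense.
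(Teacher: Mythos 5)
Your argument is correct, and its constructive core coincides with the paper's: both proofs locate the two singular, rank-one members of the pencil of $2\times 2$ matrices spanned by $\Phi(\ket{1}_U)$ and $\Phi(\ket{2}_U)$, factor each as $f_i\,g_i^{T}$, and take $\{f_1,f_2\}$, $\{g_1,g_2\}$ and the preimages of the two rank-one matrices as the new bases of $V$, $W$ and $U$. The packaging differs in a way that matters. The paper fixes coordinates, writes $\Phi$ as a pair $(A,B)$ of $2\times 2$ matrices, and carves out an explicit open dense set $\A$ by three conditions ($\det A\neq 0$, distinct eigenvalues of $A^{-1}B$, and nonvanishing of the off-diagonal entries of $A^{-1}B$ --- the last a basis-dependent convenience that makes the rank-one factorizations explicit); it then shows $\A$ is contained in the set of the lemma, which yields density but, read strictly, not openness of that set. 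You instead identify the set exactly with the complement of the hyperdeterminant hypersurface $\{\Delta=0\}$: the forward inclusion is the constructive step above, and the reverse inclusion follows from the relative invariance of $\Delta$ under $GL(U)\times GL(V)\times GL(W)$ together with $\Delta(\Phi_0)\neq 0$. That equality buys you openness as well as density, is basis-free, and makes visible the GHZ-versus-W dichotomy that is the actual content of the cited reference; your worry about the constructive direction is handled correctly by the observation that proportional left (or right) factors would force $\det$ to vanish identically on the pencil. Your closing orbit-dimension argument (stabilizer of $\Phi_0$ of dimension $4$, hence an $8$-dimensional locally closed irreducible orbit inside $\C^8$, hence open and dense) is a genuinely different, softer route not taken in the paper: it proves the lemma with essentially no computation, at the cost of not exhibiting the discriminant locus explicitly.
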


Let $\Sen = \{\Sen_{ijk}: i,j,k = 0, 1\} \in (\C^2)^{\otimes 3}$ be the tensor such that $\Sen_{ijk} = 1$ if $i=j=k$, and $\Sen_{ijk} = 0$ otherwise. Translating Lemma \ref{lem:map dense} into the language of tensors, we have the following corollary.
\begin{cor} \label{cor:map dense}
The set of tensors $\T = \{\T_{ijk}: i,j,k = 0, 1\} \in (\C^2)^{\otimes 3}$, which satisfies the property that there exist invertible tensors $A = \{A_{ij}:i,j = 0,1\}$, $B = \{B_{ij}:i,j = 0,1\}$, $C = \{C_{ij}:i,j = 0,1\}$ such that $\T_{ijk} = \sum\limits_{a,b,c} A_{ia}\Sen_{abc}B_{bj}C_{ck}$ or graphically the equality in Figure \ref{tensorTS1} holds, is a dense subset of $(\C^2)^{\otimes 3}$.
\end{cor}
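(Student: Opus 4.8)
The plan is to transport the statement of Lemma \ref{lem:map dense} across the canonical linear isomorphism between three-fold tensors and linear maps. Using the fixed bases, I would identify $(\C^2)^{\otimes 3}$ with $\Hom(U, V\otimes W)$, where $U,V,W \cong \C^2$, by sending a tensor $\T = \{\T_{ijk}\}$ to the map $\Phi_\T$ determined by $\langle jk | \Phi_\T | i \rangle = \T_{ijk}$ (that is, treating the first index as an input index and the last two as output indices). This is a bijective linear map, hence a homeomorphism for the standard topologies, and by construction it carries $\Sen$ to the ``copy'' map $\Phi_\Sen \colon |i\rangle \mapsto |i\rangle\otimes|i\rangle$.

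Next I would check that the isomorphism matches the two sets in question. The tensor relation $\T_{ijk} = \sum_{a,b,c} A_{ia}\Sen_{abc}B_{bj}C_{ck}$ says precisely that $\T$ is obtained from $\Sen$ by acting with the invertible matrices $A,B,C$ on the first, second, and third legs, respectively. Under $\T \mapsto \Phi_\T$, acting by $A$ on the (input) first leg becomes precomposition of $\Phi_\Sen$ with $A^{T}$, while acting by $B$ and $C$ on the (output) legs becomes postcomposition with $B^{T}\otimes C^{T}$; thus $\Phi_\T = (B^{T}\otimes C^{T})\,\Phi_\Sen\, A^{T}$. Since $A,B,C$ range over all invertible matrices and transposition is a bijection of the invertible matrices, the image of the Corollary's set is exactly $\{(Q_1\otimes Q_2)\,\Phi_\Sen\, Q_0 : Q_0,Q_1,Q_2 \text{ invertible}\}$.

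Then I would observe that this latter set is precisely the one appearing in Lemma \ref{lem:map dense}: writing $\Phi = (Q_1\otimes Q_2)\,\Phi_\Sen\, Q_0$ is the same as saying that in the bases $\{Q_0^{-1}|i\rangle\}$ of $U$, $\{Q_1|i\rangle\}$ of $V$, and $\{Q_2|i\rangle\}$ of $W$ the map $\Phi$ sends $|i\rangle_U \mapsto |i\rangle_V \otimes |i\rangle_W$, which is exactly the ``appropriate bases'' normal form of the Lemma. Finally, since Lemma \ref{lem:map dense} asserts that this set is open and dense in $\Hom(U, V\otimes W)$, and the preimage of a dense set under a homeomorphism is again dense, the Corollary's set is dense in $(\C^2)^{\otimes 3}$.

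The only real care required is the bookkeeping of conventions --- which index plays the role of the input of $\Phi$, and the resulting transposes on the legs --- so that the three independent invertible matrices on the legs correspond exactly to independent changes of basis on $U,V,W$ in the Lemma. I expect no genuine obstacle beyond this routine matching: once the correspondence is pinned down, the statement is a direct transfer of a topological property through a linear isomorphism.
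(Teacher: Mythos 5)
Your proposal is correct and is essentially the paper's own argument: the paper simply states that the corollary follows by ``translating Lemma \ref{lem:map dense} into the language of tensors,'' and your write-up supplies exactly that translation (the identification $\T \mapsto \Phi_\T$, the computation $\Phi_\T = (B^T\otimes C^T)\Phi_\Sen A^T$, and the matching of the normal form with the ``appropriate bases'' condition). No gap; the bookkeeping you flag is the only content, and you handle it correctly.
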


\begin{figure}
\begin{tikzpicture}[scale = 0.5]
\begin{scope}
\draw (0,0) -- (2,0) -- (4,2);
\draw (2,0) -- (4,-2);
\draw (2,0.8) node{$\T$};
\draw (1.7,0) node{{\tiny{$1$}}};
\draw (2.3,0.3) node{{\tiny{$2$}}};
\draw (2.3,-0.3) node{{\tiny{$3$}}};
\filldraw (2,0) circle(3pt);
\end{scope}

\draw (6,0) node{$=$};

\begin{scope}[xshift = 8cm]
\draw (0,0) -- (3,0) -- (6,3);
\draw (3,0) -- (6,-3);
\draw (3,0.8) node{$\Sen$};
\draw (1.5,0.8) node{$A$};
\draw (4.5,2.3) node{$B$};
\draw (4.5,-2.3) node{$C$};

\filldraw (3,0) circle(3pt);
\filldraw (1.5,0) circle(3pt);
\filldraw (4.5,1.5) circle(3pt);
\filldraw (4.5,-1.5) circle(3pt);

\draw (2.7,0) node{{\tiny{$1$}}};
\draw (3.3,0.3) node{{\tiny{$2$}}};
\draw (3.3,-0.3) node{{\tiny{$3$}}};

\draw (1.2,0) node{{\tiny{$1$}}};
\draw (1.8,0) node{{\tiny{$2$}}};

\draw (4.2,1.2) node{{\tiny{$1$}}};
\draw (4.8,1.8) node{{\tiny{$2$}}};

\draw (4.2,-1.2) node{{\tiny{$1$}}};
\draw (4.8,-1.8) node{{\tiny{$2$}}};

\end{scope}
\end{tikzpicture}
\caption{Tensors $\T$ and $\Sen$} \label{tensorTS1}
\end{figure}

Theorem \ref{thm:max=min} shows that if the capacity of each edge in a graph is a power of some fixed integer, then the quantum max-flow equals  quantum min-cut. However, in general this equality may or may not hold. See Example \ref{eg:rk7 8} \ref{eg:rk 2n^2-jk} for the illustrations. By Remark \ref{rem:ordering}, the local ordering of the edges around a vertex does not matter. We will put an integer on each edge to represent the quantum capacity, or dimension of the local Hilbert space.

\begin{example} \label{eg:rk7 8}
 Let $G_1$ be the graph shown in Figure \ref{fig:rk7} with capacity function $c_1$. This is a special case of the network in Figure \ref{fig:rk 2n^2-jk}, Example \ref{eg:rk 2n^2-jk} with $n=2, j=k=1$. So by the conclusion there, $\QMC(G_1,c_1) = 8$ and $\QMF(G_1,c_1) \leq 7$, and thus the quantum max-flow min-cut theorem does not hold. On the other hand, in Figure \ref{fig:rk8}, if we use the same graph as that in Figure \ref{fig:rk7}, but change the capacity of the two internal edges to $p$ and $q$ with $p, q \geq 2$, then direct calculations show that $\QMF(G_1,c_2)$ equals $8$ as long as $p \geq 3$ or $q \geq 3$, in which case the quantum max-flow min-cut theorem holds.
\begin{figure}
\begin{tikzpicture}[scale = 0.5]
\draw (0,0) -- (9,0) ;
\draw (0,3) -- (9,3) ;
\draw (0,1.5) -- (3,1.5) -- (6,3) ;
\draw (3,1.5) -- (6,0) ;

\draw (1,0) node{$2$};
\draw (1,1.5) node{$2$};
\draw (1,3) node{$2$};
\draw (8,0) node{$3$};
\draw (8,3) node{$3$};
\draw (4.5,0.75) node{$2$};
\draw (4.5,2.25) node{$2$};

\draw (-1,1.5) node{{\small{Input}}};
\draw (10,1.5) node{{\small{Output}}};
\end{tikzpicture}
\caption{$(G_1,c_1)$}\label{fig:rk7}
\end{figure}

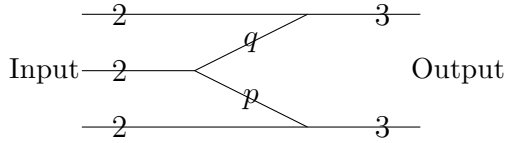
\begin{figure}
\begin{tikzpicture}[scale = 0.5]
\draw (0,0) -- (9,0) ;
\draw (0,3) -- (9,3) ;
\draw (0,1.5) -- (3,1.5) -- (6,3) ;
\draw (3,1.5) -- (6,0) ;

\draw (1,0) node{$2$};
\draw (1,1.5) node{$2$};
\draw (1,3) node{$2$};
\draw (8,0) node{$3$};
\draw (8,3) node{$3$};
\draw (4.5,0.75) node{$p$};
\draw (4.5,2.25) node{$q$};

\draw (-1,1.5) node{{\small{Input}}};
\draw (10,1.5) node{{\small{Output}}};
\end{tikzpicture}
\caption{$(G_1,c_2)$, $p, q \geq 2$}\label{fig:rk8}
\end{figure}
\end{example}

\begin{example} \label{eg:rk 2n^2-jk}
With the same graph $G_1$, one can generalize the quantum capacity function in another direction, as shown in Figure \ref{fig:rk 2n^2-jk}, where the capacity function is denoted by $c_3$, $\T_1, \T_2, \T_3$ are three tensors assigned to each vertex, and we assume $n \geq 2, j,k < n$. For the network in Figure \ref{fig:rk 2n^2-jk}, we have $\QMC(G_1,c_3) = \min \{2n^2, (2n-j)(2n-k)\}$. In the following, we prove that $\QMF(G_1,c_3) \leq 2n^2-jk$, which is strictly less than $\QMC(G_1,c_3)$.

\begin{note}
Optimizing example \ref{eg:rk 2n^2-jk}, we find, for $j=k\approx \left(2 - \sqrt{2}\right)n$, networks where the quantum/classical ratio $\frac{\QMF(G,c)} {\exp\left(\sum_\text{min cut} \ln(\text{cut dimension})\right)}$ approaches $2\left(\sqrt{2} - 1\right) \approx 0.8284271$.  This is the smallest ratio, i.e. greatest discrepancy from the analog of classical capacity (the denominator) that we have so far obtained with $3$-input networks.  Of course, with more inputs, this ratio may be driven to zero: $p$ parallel copies of such a network will have quantum/classical ratio approaching $\left[2\left(\sqrt{2} - 1\right)\right]^p$.
\end{note}

 One can view $\T_1, \T_2, \T_3$ as linear maps $\C^2 \longrightarrow \C^2 \otimes \C^2, \, \C^n \otimes \C^2 \longrightarrow  \C^{2n-k},\, \C^2 \otimes \C^n \longrightarrow  \C^{2n-j},$ respectively. By Lemma \ref{lem:map dense}, a generic $\T_1$, via a local change of basis, can be transformed to the map: $|0\rangle \mapsto |00\rangle, |1\rangle \mapsto |11\rangle$. Since a local change of basis does not affect the rank of $\beta(G_1,c_3;\T)$, we can assume $\T_1$ is given by the map just mentioned.

For a generic choice of $\T_2: \C^n \otimes \C^2 \longrightarrow  \C^{2n-k}$ and $i=0,1$, we can view $\phi_i:= \T_2(\cdot \otimes |i\rangle)$ as a linear map $\C^n \longrightarrow \C^{2n-k}$. Then generically, both the image of $\phi_0$ and the image of $\phi_1$ are of dimension $n$, and  thus they must have an overlap of dimension at least $k$. So there exist two sets of linearly independent vectors, $\{v_1, \cdots, v_k\}$, $\{u_1, \cdots, u_k\}$, such that $\T_2(v_i \otimes |0\rangle) = \T_2(u_i \otimes |1\rangle), 1 \leq i \leq k$. By the same argument, there exist two sets of linearly independent vectors, $\{w_1, \cdots, w_j\}$, $\{x_1, \cdots, x_j\}$ such that $\T_3(|0\rangle \otimes w_h) = \T_2(|1\rangle \otimes x_h), 1 \leq h \leq j.$ Then the subspace spanned by $\{v_i \otimes |00\rangle \otimes w_h - u_i \otimes |11\rangle \otimes x_h: 1 \leq i \leq k, 1 \leq h \leq j \}$ with dimension $jk$ is contained in the kernel of $\T_2\otimes \T_3$. Note that this subspace is also contained in the image of $Id_{\C^n} \otimes \T_1 \otimes Id_{\C^n}$, therefore the image of $\beta(G_1,c_3; \T) = (\T_1\otimes \T_2)\circ(Id_{\C^n} \otimes \T_1 \otimes Id_{\C^n})$ is at most $2n^2 - jk$, which implies that the rank of $\beta(G_1,c_3;\T)$ is at most $2n^2-jk$. By Proposition \ref{prop:open dense}, the set of tensors which realize the maximum rank is an open dense subset, hence $\QMF(G_1,c_3) \leq 2n^2-jk$.

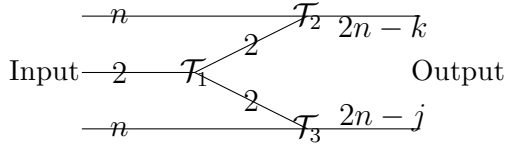
\begin{figure}
\begin{tikzpicture}[scale = 0.5]
\draw (0,0) -- (9,0) ;
\draw (0,3) -- (9,3) ;
\draw (0,1.5) -- (3,1.5) -- (6,3) ;
\draw (3,1.5) -- (6,0) ;

\draw (3,1.5) node{$\T_1$};
\draw (6,3) node{$\T_2$};
\draw (6,0) node{$\T_3$};

\draw (1,0) node{$n$};
\draw (1,1.5) node{$2$};
\draw (1,3) node{$n$};
\draw (8,0.3) node{$2n-j$};
\draw (8,2.7) node{$2n-k$};
\draw (4.5,0.75) node{$2$};
\draw (4.5,2.25) node{$2$};

\draw (-1,1.5) node{{\small{Input}}};
\draw (10,1.5) node{{\small{Output}}};
\end{tikzpicture}
\caption{$(G_1,c_3), n \geq 2, j,k < n$}\label{fig:rk 2n^2-jk}
\end{figure}
\end{example}

\section{Quantum max-flow min-cut theorem: Version II}
\label{sec:second}

Here we study a second version of quantum max-flow min-cut theorem for a more restricted class of tensor networks originally motivated by \cite{calegari2010}. Roughly speaking, vertices of the same type (to be defined below) in a tensor network are required to be assigned the same tensor. In \cite{calegari2010}, it was conjectured that the quantum max-flow for this version equals the quantum min-cut. And if it were true, it implies the existence of some tensors with certain \lq\lq positive'' properties, which is part of the work in \cite{calegari2010} for proving the positivity of the universal pairing in dimension $3$. Unfortunately, we show below that this conjecture is false by a concrete example. \footnote{Nevertheless, the authors in \cite{calegari2010} avoided the use of the conjecture and constructed the tensors with more efforts.} Some more examples and properties will also be presented.

Notations from Section \ref{sec:first} will be used here. For $G = (\tilde{V},E)$ with a capacity function $c$ and a local ordering $L$ of the ends of the edges incident to each vertex, we define the {\it{valence type}} $B_v$ of a vertex $v$ to be the sequence $(c_{e(v,1)}, \cdots, c_{e(v,d_v)})$, and define $\B(G,c,L)$ to be the set of valence types of vertices of $G$. Let $\I^B = \bigotimes\limits_{i=1}^k \C^{m_i}$ for a {\it{valence type}} $B = (m_1, \cdots, m_k)$, and let $\I' = \bigoplus\limits_{B \in \B(G,c,L)} \I^B$. {\it Now the vertices with the same valence type have to be assigned the same tensor}. Given a family of tensors $\T = \{\T_B: B \in \B(G,c,L)\} \in \I'$, a vertex $v$ with valence type $B_v$ is assigned the tensor $\T_{B_v}$ $($according to the local ordering of its incident edges$)$. Again contracting the graphical edges of the tensor network results in a linear map, denoted by $\beta(G,c,L;\T)$, in $Hom(V_S,V_T)$.

\begin{definition}
The quantum max-flow, $\QMF(G,c,L)$, for Version II is defined to be the maximum rank of $\beta(G,c,L;\T)$.
\end{definition}

The quantum min-cut $\QMC(G,c)$ is the same in either version. It is clear from the definitions that $\QMF(G,c,L)$ $\leq$ $\QMF(G,c)$, and thus $\QMF(G,c,L) \leq \QMC(G,c)$. When does the equality hold? Again, we do not know a sufficient and necessary condition to this question. let us first look at some examples.



All edges of the graphs in Example \ref{eg:rk3 4}, \ref{eg:rk 4} and \ref{eg:rk6} have capacity $2$, so we omit the labels of the capacity to make the pictures nicer. Instead, we put a number at each edge end incident to a vertex to stand for the local ordering. Note that in this version, the local ordering affects the valence type, and so it also affects maximal rank of the tensor network.

\begin{example} \label{eg:rk3 4}
In Figure \ref{fig:rk3}, denote the graph, the capacity function and the local ordering by $G_1,c_1$ and $L_1$, respectively. There is only one valence type of the vertices, namely $(2,2,2)$. So we only need to choose one tensor $\T \in (\C^2)^{\otimes 3}$, and assign it to each vertex according to the local ordering $L_1$. Clearly $\QMC(G_1,c_1)$ is equal to $4$. However, it will be proved in Appendix \ref{subsec:example rk 3} that $\QMF(G_1,c_1,L_1) \leq 3$. Thus $\QMF(G_1,c_1,L_1) < \QMC(G_1,c_1)$.

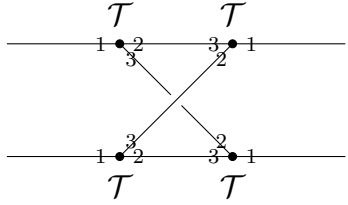
\begin{figure}
\begin{tikzpicture}[scale = 0.5]
\begin{scope}
\draw (3,3) -- (6,0);
\draw [color = white, line width = 2mm](3,0) -- (6,3);
\draw (3,0) -- (6,3);
\draw (0,3) -- (9,3);
\draw (0,0) -- (9,0) ;

\filldraw (3,0) circle(3pt);
\filldraw (3,3) circle(3pt);
\filldraw (6,0) circle(3pt);
\filldraw (6,3) circle(3pt);

\draw (3,-0.8) node{$\T$};
\draw (3,3.8) node{$\T$};
\draw (6,-0.8) node{$\T$};
\draw (6,3.8) node{$\T$};

\draw (2.5,0) node{{\tiny{$1$}}};
\draw (3.5,0) node{{\tiny{$2$}}};
\draw (3.3,0.4) node{{\tiny{$3$}}};

\draw (5.5,0) node{{\tiny{$3$}}};
\draw (6.5,0) node{{\tiny{$1$}}};
\draw (5.7,0.4) node{{\tiny{$2$}}};

\draw (2.5,3) node{{\tiny{$1$}}};
\draw (3.5,3) node{{\tiny{$2$}}};
\draw (3.3,2.6) node{{\tiny{$3$}}};

\draw (5.5,3) node{{\tiny{$3$}}};
\draw (6.5,3) node{{\tiny{$1$}}};
\draw (5.7,2.6) node{{\tiny{$2$}}};

\end{scope}
\end{tikzpicture}
\caption{$(G_1,c_1,L_1)$}\label{fig:rk3}
\end{figure}

\end{example}

\begin{example} \label{eg:rk 4}
This example will show that the ordering on the ends of edges is crucial. In Figure \ref{fig:rk4}, the same graph as that in Figure \ref{fig:rk3} is drawn with the same local ordering for all vertices except the one on the lower right. We denote this new ordering by $L_2$. Then it turns out that $\QMF(G_1,c_1,L_2) = 4 = \QMC(G_1,c_1)$.

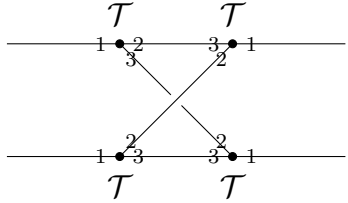
\begin{figure}
\begin{tikzpicture}[scale = 0.5]
\begin{scope}
\draw (3,3) -- (6,0);
\draw [color = white, line width = 2mm](3,0) -- (6,3);
\draw (3,0) -- (6,3);
\draw (0,3) -- (9,3);
\draw (0,0) -- (9,0) ;

\filldraw (3,0) circle(3pt);
\filldraw (3,3) circle(3pt);
\filldraw (6,0) circle(3pt);
\filldraw (6,3) circle(3pt);

\draw (3,-0.8) node{$\T$};
\draw (3,3.8) node{$\T$};
\draw (6,-0.8) node{$\T$};
\draw (6,3.8) node{$\T$};

\draw (2.5,0) node{{\tiny{$1$}}};
\draw (3.5,0) node{{\tiny{$3$}}};
\draw (3.3,0.4) node{{\tiny{$2$}}};

\draw (5.5,0) node{{\tiny{$3$}}};
\draw (6.5,0) node{{\tiny{$1$}}};
\draw (5.7,0.4) node{{\tiny{$2$}}};

\draw (2.5,3) node{{\tiny{$1$}}};
\draw (3.5,3) node{{\tiny{$2$}}};
\draw (3.3,2.6) node{{\tiny{$3$}}};

\draw (5.5,3) node{{\tiny{$3$}}};
\draw (6.5,3) node{{\tiny{$1$}}};
\draw (5.7,2.6) node{{\tiny{$2$}}};

\end{scope}
\end{tikzpicture}
\caption{$(G_1,c_1,L_2)$}\label{fig:rk4}
\end{figure}
\end{example}

\begin{example} \label{eg:rk6}
Denote the graph, the capacity function and the local ordering by $G_2,c_2$ and $L_3$, respectively, in Figure \ref{fig:rk6}. Then by a similar proof as that in Example \ref{eg:rk3 4}, we have $\QMF(G_2,c_2,L_3) = 6 < 8 = \QMC(G_2,c_2)$. However, since the capacity on each edge is $2$, by Theorem \ref{thm:max=min}, the first version of quantum max-flow min-cut theorem holds: $\QMF(G_2,c_2) = \QMC(G_2,c_2) = 8$.
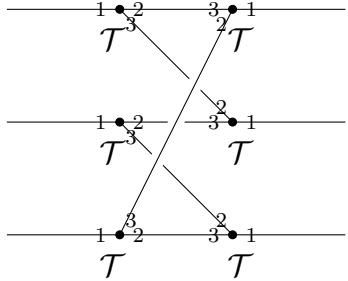
\begin{figure}
\begin{tikzpicture}[scale = 0.5]
\begin{scope}
\draw (3,3) -- (6,0);
\draw (3,6) -- (6,3);
\draw (0,3) -- (9,3);
\draw [color = white, line width = 2mm](3,0) -- (6,6);
\draw (3,0) -- (6,6);
\draw (0,0) -- (9,0);
\draw (0,6) -- (9,6);

\filldraw (3,0) circle(3pt);
\filldraw (3,3) circle(3pt);
\filldraw (3,6) circle(3pt);
\filldraw (6,0) circle(3pt);
\filldraw (6,3) circle(3pt);
\filldraw (6,6) circle(3pt);
%
%
\draw (2.8,-0.8) node{$\T$};
\draw (2.8,2.2) node{$\T$};
\draw (6.2,-0.8) node{$\T$};
\draw (6.2,2.2) node{$\T$};
\draw (2.8,5.2) node{$\T$};
\draw (6.2,5.2) node{$\T$};

\draw (2.5,0) node{{\tiny{$1$}}};
\draw (3.5,0) node{{\tiny{$2$}}};
\draw (3.3,0.4) node{{\tiny{$3$}}};

\draw (5.5,0) node{{\tiny{$3$}}};
\draw (6.5,0) node{{\tiny{$1$}}};
\draw (5.7,0.4) node{{\tiny{$2$}}};

\draw (2.5,3) node{{\tiny{$1$}}};
\draw (3.5,3) node{{\tiny{$2$}}};
\draw (3.3,2.6) node{{\tiny{$3$}}};

\draw (5.5,3) node{{\tiny{$3$}}};
\draw (6.5,3) node{{\tiny{$1$}}};
\draw (5.7,3.4) node{{\tiny{$2$}}};

\draw (2.5,6) node{{\tiny{$1$}}};
\draw (3.5,6) node{{\tiny{$2$}}};
\draw (3.3,5.6) node{{\tiny{$3$}}};

\draw (5.5,6) node{{\tiny{$3$}}};
\draw (6.5,6) node{{\tiny{$1$}}};
\draw (5.7,5.6) node{{\tiny{$2$}}};

\end{scope}
\end{tikzpicture}
\caption{$(G_2,c_2,L_3)$}\label{fig:rk6}
\end{figure}
\end{example}

The examples above showed that even in the case where all edges have the same capacity (dimension), the quantum max-flow/min-cut equality can still fail to hold, and the local orderings also affect the equality.

The following proposition states that one can, for each valence type $B$, fix a tensor $\T_B$, such that for all graphs $G$, when assigned the tensors $\{\T_B: B \in \B(G,c,L)\}$, the resulting $\beta(G,c,L;\T)$ has maximum rank.

\begin{prop}
Let $\B$ be the countable set of finite sequences of positive integers and let $\T^0 = \{\T^0_B: B \in \B\} \in \prod\limits_{B \in \B} \I^B$ have the property that the entries of all $\T^0_B\; '$s are algebraically independent from each other over the rational field $\Q$. Then for any graph $G$ with a capacity function $c$ and a local ordering $L$, the assignment $v \mapsto \T^0_{B_v},$ yields a linear map $\beta(G,c,L;\T^0_{|_{G}})$ of maximum rank $\QMF(G,c,L)$.
\begin{proof}
By the proof in Proposition \ref{prop:open dense}, for a graph $G$ with a general assignment $\T$ of tensors, the condition that $\beta(G,c,L;\T)$ does not have maximum rank is equivalent to the vanishing condition of a set of polynomial equations in the entries of $\T$. None of these polynomials are, for each $G$, identically $0$, since there are tensors which realize the maximum rank and thus those tensors do not satisfy these polynomial equations. Hence the value of these polynomials at $\T^0$ is not $0$ because the entries of all the $\T^0_{B}\; '$s are algebraically independent from each other, which implies for each $G$, the maximum rank is achieved.
\end{proof}
\end{prop}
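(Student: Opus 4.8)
The plan is to reduce, for each fixed graph, the assertion to the single elementary fact that a nonzero polynomial over $\Q$ cannot vanish on an algebraically independent tuple, which is exactly the hypothesis imposed on $\T^0$.

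First I would fix an arbitrary graph $G$ with capacity function $c$ and local ordering $L$, and observe that although $\T^0$ ranges over the infinite product $\prod_{B \in \B} \I^B$, the map $\beta(G,c,L;\T^0_{|_G})$ depends only on the finitely many tensors $\T^0_B$ with $B \in \B(G,c,L)$, i.e.\ on finitely many scalar entries. From the multi-index formula for $\beta$, each matrix entry of $\beta(G,c,L;\T)$ is a sum over internal index configurations of products of tensor entries, hence a polynomial with integer (so rational) coefficients in the entries of the $\T_B$'s; in Version II the same variables simply recur at every vertex of a given valence type. Writing $M = \QMF(G,c,L)$, the locus where $\mathrm{rank}\,\beta < M$ is exactly the common zero set of the collection of all $M\times M$ minors of $\beta$, each of which is such a rational-coefficient polynomial, precisely as in the proof of Proposition \ref{prop:open dense}.

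Next I would exploit that $M$ is actually attained: by definition of $\QMF(G,c,L)$ there is some assignment $\T$ with $\mathrm{rank}\,\beta(G,c,L;\T) = M$, so not every $M\times M$ minor vanishes identically. Fix one minor $P_G$ that is a nonzero polynomial over $\Q$ in the entries of $\{\T_B : B \in \B(G,c,L)\}$. Because those entries of $\T^0$ form a finite subset of a family that is algebraically independent over $\Q$, no nonzero polynomial with rational coefficients can vanish on them; in particular $P_G(\T^0_{|_G}) \neq 0$. Hence $\beta(G,c,L;\T^0_{|_G})$ has a nonvanishing $M\times M$ minor and therefore rank at least $M$. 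Combined with the trivial upper bound $\mathrm{rank}\,\beta(G,c,L;\T^0_{|_G}) \le \QMF(G,c,L) = M$, equality follows, and since $G$ was arbitrary the proposition holds.

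The argument is short because the real work has already been done in Proposition \ref{prop:open dense}; the only genuinely new point is the passage from a \emph{generic} choice for each individual $G$ to a single \emph{fixed universal} choice that works for all $G$ at once, and the one thing that must be gotten right is that the minor polynomials carry rational (indeed integer) coefficients, so that algebraic independence over $\Q$ is exactly the hypothesis needed to certify nonvanishing. It is also worth noting that such a $\T^0$ exists at all: the required entries are countably many, while $\C$ has uncountable transcendence degree over $\Q$, so a countable algebraically independent family can be chosen once and for all, and this single $\T^0$ then serves simultaneously for every graph, capacity function, and local ordering.
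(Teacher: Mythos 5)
Your proof is correct and follows essentially the same route as the paper's: reduce to the nonvanishing of the rank-$M$ minors, which are rational-coefficient polynomials in the finitely many relevant tensor entries, note that some minor is not identically zero because the maximum rank is attained, and conclude via algebraic independence over $\Q$. Your added remarks --- that the coefficients are integral so independence over $\Q$ is exactly the right hypothesis, and that such a $\T^0$ exists because $\C$ has uncountable transcendence degree over $\Q$ --- are welcome clarifications the paper leaves implicit.
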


\section{Entanglement Entropy}
\label{sec:entanglement}

In this section, we explore the quantum max-flow min-cut theorem in the context of quantum entanglement entropy, and we will only consider Version I of the tensor networks. Let's first review.

Let $\rho$ be the density matrix of a mixed state in a quantum system with the Hilbert space $\Hilb \cong \C^d$, then the Von Neumann entropy \cite{wehrl1978general}, $\Sen(\rho)$,  of $\rho$ is defined to be
\begin{equation}
\Sen(\rho):= -Tr (\rho \log \rho).
\end{equation}

Let $\lambda_i, i=1, \cdots, d$ be the eigenvalues of $\rho$. If we use the convention $0 \log 0 = 0,$ then we have
\begin{equation}
\Sen(\rho) = -\sum\limits_{i=1}^d \lambda_i \log \lambda_i.
\end{equation}


For a composite system with the Hilbert space $\Hilb = \Hilb_A \otimes \Hilb_B$, and a pure state $|\psi\rangle \in \Hilb$, the $($Von Neumann$)$ entanglement entropy, which we denote by $\EE(|\psi\rangle)$, of $|\psi\rangle$ is $\Sen(\rho_A(|\psi\rangle))$, where $\rho_A(|\psi\rangle)$ is the reduced density matrix of $|\psi\rangle$ on $A$. By Schmidt decomposition Theorem, $\rho_A(|\psi\rangle)$ has the same set of nonzero eigenvalues as $\rho_B(|\psi\rangle)$, thus $\EE(|\psi\rangle)$ is also equal to $\Sen(\rho_B(|\psi\rangle))$.

The entanglement entropy of a state in a bipartite system measures the entanglement between the subsystems. When $|\psi\rangle = |\psi\rangle_A \otimes |\psi\rangle_B$ is a product state, then $\EE(|\psi\rangle) = 0,$ which is the minimum value of $\EE$; When the reduced density matrix $\rho_A$ or $\rho_B$ is maximally mixed, $\EE(|\psi\rangle)$ achieves its maximum $\min\{\log d_A, \log d_B\} $, where $d_A, d_B$ are the dimensions of the subsystems $\Hilb_A, \Hilb_B$, respectively.

Recall from Section \ref{sec:first}, that for a finite graph $G= (\tilde{V},E)$ with input $S$, output $T$, the capacity function $c$ and a tensor assignment $T = \{\T_v: v \in V\}$, we have the associated linear map $\beta(G,c;\T) \in Hom(V_S, V_T) = V_S^* \otimes V_T$. Now identifying $V_S^*$ with $V_S$ using the chosen basis in $V_S$, this determines an element $\alpha(G,c;\T) \in V_S \otimes V_T$ (which is exactly the $\alpha(G,c;\T)$ introduced in Section \ref{sec:first}). More explicitly, if we denote the basis of $V_S, V_T$ by $\{|i\rangle_S: i =1,2, \cdots \}, \, \{|j\rangle_T:j=1,2, \cdots\}$, and let the matrix of $\beta(G,c;\T)$ under this basis be $C$, then we have:
\begin{equation}
\alpha(G,c;\T) = \sum\limits_{i,j} C_{ij} |i\rangle_S|j\rangle_T.
\end{equation}

Since $Tr(CC^{\dag}) = \sum\limits_{i,j} |C_{ij}|^2$, $\frac{\alpha(G,c;\T)}{\sqrt{Tr(CC^{\dag})}}$ is a normalized state in $ V_S \otimes V_T$. The reduced density matrix $\rho_S(\frac{\alpha(G,c;\T)}{\sqrt{Tr(CC^{\dag})}})$ equals $\frac{CC^{\dag}}{Tr(CC^{\dag})}$. Define the entanglement entropy between $S$ and $T$ for a given tensor assignment $\T$:
\begin{eqnarray}
\EE(G,c;\T) & := \EE(\frac{\alpha(G,c;\T)}{\sqrt{Tr(CC^{\dag})}})  = \Sen(\frac{CC^{\dag}}{Tr(CC^{\dag})}) \\
           & =  -\frac{Tr(CC^{\dag}\log(CC^{\dag}))}{Tr(CC^{\dag})}+ \log(Tr(CC^{\dag})).
\end{eqnarray}
Let $\MEE(G,c)$ be the maximum value of $\EE(G,c;\T)$ over all $\T'$s. The following lemma shows the logarithm of the quantum min-cut naturally provides an upper bound for the $\MEE(G,c)$, which is well known in literature. See for instance \cite{orus2014}.

\begin{lem}
\label{lem:ee<max}
Let $(G,c)$ be as above, then $\MEE(G,c)\leq \log \QMC(G,c)$.
\begin{proof}
For any tensor assignment $\T$, let the rank of $\beta(G,c;\T)$ be $r$, namely, the rank of $C$ is $r$ with the notations introduced above. Then the rank of $\rho_S = \frac{CC^{\dag}}{Tr(CC^{\dag})}$ is at most $r$. Let $\lambda_1, \cdots, \lambda_k$ be the non-zero eigenvalues of $\rho_S$, where $k \leq r$ and $Tr(\rho_S) = \sum\limits_{i=1}^{k} \lambda_i = 1$. Then $\EE(G,c;\T) = -\sum\limits_{i=1}^{k} \lambda_i \log \lambda_i \leq \log k \leq \log r \leq \log \QMF(G,c)$, with the first \lq\lq $=$'' holds if and only if $\lambda_i = \frac{1}{k}$ for all $i$. Thus $\MEE(G,c) \leq \log \QMF(G,c) \leq \log \QMC(G,c)$, where the last inequality is given by Corollary \ref{lem:max<min}.
\end{proof}
\end{lem}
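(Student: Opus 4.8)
The plan is to bound $\EE(G,c;\T)$ for each fixed tensor assignment $\T$ by $\log\QMF(G,c)$, take the maximum over $\T$ to obtain $\MEE(G,c)\le\log\QMF(G,c)$, and then invoke Corollary \ref{lem:max<min} to conclude $\MEE(G,c)\le\log\QMC(G,c)$. The only substantive ingredient is the standard maximum-entropy estimate: a probability distribution supported on $k$ atoms has Shannon entropy at most $\log k$. Everything else is a rank identity or a direct appeal to definitions.

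First I would fix $\T$ and let $C$ denote the matrix of $\beta(G,c;\T)$ with $r=\rk(C)$. Since $\rho_S=CC^{\dagger}/Tr(CC^{\dagger})$ and $\rk(CC^{\dagger})=\rk(C)=r$, the reduced density matrix $\rho_S$ has at most $r$ nonzero eigenvalues; call them $\lambda_1,\dots,\lambda_k$ with $k\le r$ and $\sum_i\lambda_i=1$. By the spectral formula recalled at the start of the section, $\EE(G,c;\T)=-\sum_{i=1}^k\lambda_i\log\lambda_i$.

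The key step is to observe that $-\sum_{i=1}^k\lambda_i\log\lambda_i\le\log k$. I would prove this by applying Jensen's inequality to the concave function $t\mapsto -t\log t$, equivalently by noting that the uniform distribution maximizes entropy on a fixed finite support, with equality precisely when each $\lambda_i=1/k$. Chaining the inequalities then gives $\EE(G,c;\T)\le\log k\le\log r\le\log\QMF(G,c)$, where the final bound uses $r=\rk(\beta(G,c;\T))\le\QMF(G,c)$ by the definition of the quantum max-flow.

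Finally, taking the supremum over all tensor assignments yields $\MEE(G,c)\le\log\QMF(G,c)$, and Corollary \ref{lem:max<min} supplies $\QMF(G,c)\le\QMC(G,c)$, completing the chain. I do not expect a genuine obstacle here: the entire analytic content is the maximum-entropy inequality, while each remaining step is either the rank identity $\rk(CC^{\dagger})=\rk(C)$ or a direct invocation of the definitions and of the already-established max-flow $\le$ min-cut bound.
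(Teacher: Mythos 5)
Your proposal is correct and follows essentially the same route as the paper's proof: bound the entanglement entropy by $\log k$ via the maximum-entropy inequality for a distribution on $k \leq r$ atoms, chain through $\log r \leq \log \QMF(G,c)$, and finish with Corollary \ref{lem:max<min}. No gaps.
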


We show that for the graphs considered in Theorem \ref{thm:max=min}, the inequality in Lemma \ref{lem:ee<max} is saturated.

\begin{thm}
\label{thm:ee=max}
For $(G,c)$, such that the capacity on each edge is a power of $d$, for some fixed integer $d$, then $\MEE(G,c) = \log \QMC(G,c)$.
\begin{proof}
In the proof of Theorem \ref{thm:max=min}, a particular linear map $\beta(G,c;\T)$ is produced which realizes the maximum rank $\QMF(G,c) = \QMC(G,c)$. Moreover, the matrix of $\beta(G,c;\T)$ under some appropriate orthogonal basis of $V_S$ and $V_T$ is given by that in Equation \ref{mat:beta}.  Therefore, the matrix of $\beta \beta^{\dag}$ is diagonal, of rank $\QMC(G,c)$, with either $1$ or $0$ on the diagonal. It follows that $\EE(G,c;\T) = \log \QMC(G,c)$. Combining with Lemma \ref{lem:ee<max}, we have $\MEE(G,c) = \log \QMC(G,c)$.
\end{proof}
\end{thm}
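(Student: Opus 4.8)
The plan is to leverage the explicit construction already supplied in the proof of Theorem \ref{thm:max=min}, together with the upper bound from Lemma \ref{lem:ee<max}. Since Lemma \ref{lem:ee<max} gives $\MEE(G,c) \leq \log \QMC(G,c)$, it suffices to exhibit a single tensor assignment $\T$ for which $\EE(G,c;\T) = \log \QMC(G,c)$; this will force the maximum over all assignments to saturate the bound.

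First I would recall the matrix produced in the proof of Theorem \ref{thm:max=min}. There, for the case where every edge capacity is a power of $d$, one constructs a concrete $0$--$1$ tensor assignment whose associated map $\beta(G,c;\T)$, in appropriate ordered bases of $V_S$ and $V_T$, has the block form of Equation \ref{mat:beta}: an $M \times M$ identity block in the upper left (where $M = \QMC(G,c)$) and zeros elsewhere. The key observation is that these bases are \emph{orthonormal} standard bases, so the matrix $C$ of $\beta(G,c;\T)$ really is this partial-identity matrix with respect to the inner product used to define the entanglement entropy.

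Next I would compute the reduced density matrix directly. Because $C$ is the partial identity of Equation \ref{mat:beta}, the product $CC^{\dag}$ is diagonal with exactly $M$ ones and the remaining entries zero, so $\rk(CC^{\dag}) = M = \QMC(G,c)$ and $Tr(CC^{\dag}) = M$. Hence $\rho_S = \frac{CC^{\dag}}{Tr(CC^{\dag})}$ is diagonal with $M$ nonzero eigenvalues each equal to $\frac{1}{M}$. Plugging into the von Neumann entropy formula gives $\EE(G,c;\T) = -\sum_{i=1}^{M} \frac{1}{M}\log\frac{1}{M} = \log M = \log \QMC(G,c)$. Combining this with Lemma \ref{lem:ee<max} yields the equality $\MEE(G,c) = \log \QMC(G,c)$.

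I do not expect a genuine obstacle here, since the heavy lifting — finding tensors that realize the min-cut as the rank of the contracted map — was already done in Theorem \ref{thm:max=min}. The only point requiring a small amount of care is verifying that the construction's basis is orthonormal, so that the equality case of the entropy bound (all nonzero eigenvalues equal) is actually attained rather than merely the rank being correct; a map of the right rank need not be maximally entangling in general, but the flat $0$--$1$ structure of Equation \ref{mat:beta} guarantees a uniform eigenvalue distribution. Once that is noted, the argument is immediate.
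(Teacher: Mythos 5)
Your proposal is correct and follows essentially the same route as the paper: take the explicit $0$--$1$ tensor assignment from Theorem \ref{thm:max=min} whose matrix is the partial identity of Equation \ref{mat:beta}, observe that $CC^{\dag}$ is then diagonal with $M=\QMC(G,c)$ unit eigenvalues so the normalized reduced density matrix is maximally mixed on its support, and combine the resulting entropy $\log M$ with the upper bound of Lemma \ref{lem:ee<max}. Your added remark that a map of the right rank need not be maximally entangling, and that the flat spectrum of the construction is what actually saturates the bound, is a worthwhile point of care but does not change the argument.
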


\section{Quantum max flow and quantum satisfiability}
\label{sec:qsat}

The quantum max flow is related to the quantum satisfiability problem, $\qsat$, introduced by Bravyi~\cite{bravyi2011efficient}. $\qsat$ is the following problem: you are given a hypergraph $G=(V,E)$, a labeling of the vertices  which we interpret as qudits dimensions $d:V \rightarrow \N$, $v\rightarrow d_v$, ranks labeling for hyperedges $r:E \rightarrow \N , \ e \rightarrow r_e$, and a (classical description of)  projectors $\Pi: e \rightarrow \Pi_e$ where each projector $\Pi_e$ acts only on the qudits in $e$ (i.e. $\Pi_e : \bigotimes_{v \in e} \C^{d_v} \rightarrow \bigotimes_{v \in e} \C^{d_v}$), and has rank $r_e$. We define the Hamiltonian
\begin{equation}
H = \sum_{e\in E} \Pi_e ^{(e)} \otimes I ^{( [n] \setminus e)}
\label{eq:Hamiltonian}
\end{equation}
The task is to decide whether $\ker(H)$ is non-trivial, i.e. whether there exists a state $\ket{\psi} \neq 0$ s.t. $H \ket{\psi}= 0$. (Since all the terms in the Hamiltonian $H$ are  positive semidefinite, $0$ is the smallest possible eigenvalue.)
\footnote{For complexity theoretic reasons, the smallest eigenvalue of an unsatisfiable $\qsat$ instance is at least $1/p(n)$, where $n$ is the number of qudits in the system for some polynomial $p$. This requirement is irrelevant for this work.} What is the minimal value $\dim \ker(H)$ can take when $G,d,r$ are held fixed? By the rank-nullity theorem, this is equivalent (up to an additive factor) to the question: what is the maximal value $\rk(H)$ can take? Perhaps not surprisingly, the answer is the same as Proposition~\ref{prop:open dense}: a generic choice for the projectors gives a distinct value almost surely (with respect to the random choice of projectors), and this value minimizes $\dim\ker (H)$ among all possible choices for the projectors~\cite[Section 3.3.1]{laumann2010statistical}.

We define the function $\GQSAT(G,d,r)$ where $G,d,r$ are as before, but the projectors are not given explicitly, and this function is $\dim \ker (H)$ (see Eq.~\eqref{eq:Hamiltonian}) for a generic choice for the projectors. Note that the appropriate instance is generically unsatisfiable if and only if this function is $0$. A lower-bound on $\GQSAT$ is given in Ref.~\cite{sattath2015when}, and several other papers have studied when $\qsat$ is generically satisfiable (or, in our terminology, when $\GQSAT$ is strictly positive)~\cite{bravyi2009bounds,laumann2010product,ambainis2010quantum,movassagh2010unfrustrated,coudron2012unfrustration}.

We do not know whether there is a polynomial time reduction between $\QMF(\cdot)$ and $\GQSAT(\cdot)$. Nevertheless, there are specific cases where these problems are equivalent:

\begin{claim} $\GQSAT(G_2,d,r)= d_3(d_1 d_2 - r_1) -  \QMF(G_3,c)$ where $(G_2,d,r)$ is depicted in Fig.~\ref{fig:three_qudits} and $(G_3,c)$ is depicted in Fig.~\ref{fig:three_qudits_tensor_network}.
\label{cl:gen_qsat_eq_qmf}
\end{claim}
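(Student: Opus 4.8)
The plan is to read off from the figures that $(G_2,d,r)$ is the three-qudit chain with qudit dimensions $d_1,d_2,d_3$ and two $2$-local projectors, $\Pi_1$ of rank $r_1$ acting on qudits $\{1,2\}$ and $\Pi_2$ of rank $r_2$ acting on qudits $\{2,3\}$, so that $H = \Pi_1\otimes I_3 + I_1\otimes\Pi_2$. First I would observe that, because both terms are positive semidefinite projectors, $\ket{\psi}\in\ker H$ if and only if $\bra{\psi}\Pi_1\ket{\psi}=\bra{\psi}\Pi_2\ket{\psi}=0$, i.e. $\ker H = \ker(\Pi_1\otimes I_3)\cap\ker(I_1\otimes\Pi_2)$. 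Writing $K := \ker(\Pi_1\otimes I_3) = \ker\Pi_1\otimes\C^{d_3}$, which has dimension $(d_1 d_2 - r_1)d_3$, I note that $\ker H = \ker\big((I_1\otimes\Pi_2)|_K\big)$, so rank–nullity applied to the restricted map gives
\[
\dim\ker H = \dim K - \rk\big((I_1\otimes\Pi_2)|_K\big) = d_3(d_1 d_2 - r_1) - \rk\big((I_1\otimes\Pi_2)|_K\big).
\]
This identity holds for \emph{every} choice of $\Pi_1,\Pi_2$, not only generic ones.

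Next I would identify the rank term with the tensor contraction of $(G_3,c)$. The network in Fig.~\ref{fig:three_qudits_tensor_network} has two vertices: $\T_1$, with one input leg of dimension $d_1 d_2 - r_1$ and two further legs of dimension $d_1$ (an output) and $d_2$ (internal); and $\T_2$, with the internal $d_2$ leg, an input leg of dimension $d_3$, and an output leg of dimension $r_2$. Taking $\T_1$ to be a basis-inclusion $\ker\Pi_1\hookrightarrow\C^{d_1}\otimes\C^{d_2}$ and $\T_2$ to be the co-restriction $\Pi_2\colon\C^{d_2}\otimes\C^{d_3}\twoheadrightarrow\operatorname{im}\Pi_2\cong\C^{r_2}$, a direct expansion of $\beta(G_3,c;\T)$ (contracting the $d_2$ edge) shows it equals $(I_1\otimes\Pi_2)|_K$ under the identification $\operatorname{im}\Pi_2\cong\C^{r_2}$. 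Hence $\rk\beta(G_3,c;\T) = \rk\big((I_1\otimes\Pi_2)|_K\big)$ for this particular assignment.

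The final and main step is to reconcile the two distinct notions of ``generic''. On the $\GQSAT$ side, the cited fact that a generic projector choice minimizes $\dim\ker H$ yields
\[
\GQSAT(G_2,d,r) = d_3(d_1 d_2 - r_1) - \max_{\Pi_1,\Pi_2}\rk\big((I_1\otimes\Pi_2)|_K\big),
\]
so it suffices to prove $\max_{\Pi_1,\Pi_2}\rk\big((I_1\otimes\Pi_2)|_K\big) = \QMF(G_3,c) = \max_{\T}\rk\beta(G_3,c;\T)$. One inequality is immediate from the previous paragraph, since projector/kernel data is a special case of a tensor assignment. For the reverse, I would show the projector-type tensors form a dense subset of all assignments: a generic $\T_1\in\Hom(\C^{d_1 d_2 - r_1},\C^{d_1}\otimes\C^{d_2})$ is injective (its image a generic $(d_1 d_2 - r_1)$-dimensional subspace, hence a generic $\ker\Pi_1$ with $\rk\Pi_1=r_1$), and a generic $\T_2\in\Hom(\C^{d_2}\otimes\C^{d_3},\C^{r_2})$ is surjective (corresponding to a generic rank-$r_2$ projector $\Pi_2$). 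By Proposition~\ref{prop:open dense} the maximal rank $\QMF(G_3,c)$ is attained on a dense open subset of assignments, which therefore meets this dense projector family; so the two maxima coincide, establishing the claim.

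The hard part is this last identification: one must verify that restricting $\T_1,\T_2$ to the injective/surjective (projector) form does not lower the generic, hence maximal, rank, and that the constraints $\rk\Pi_1=r_1$, $\rk\Pi_2=r_2$ are exactly what force generic $\T_1$ to have full column rank and generic $\T_2$ full row rank. Proposition~\ref{prop:open dense} does the real work here, guaranteeing constancy of the maximal rank on a dense open set and thereby letting the ``generic projector'' value defining $\GQSAT$ be read off as the ``maximum over all tensors'' value defining $\QMF(G_3,c)$.
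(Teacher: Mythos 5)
Your proof is correct and follows essentially the same route as the paper's: realize $\ker\Pi_1$ as the image of $\T_1$ and the satisfying subspace of $\Pi_2$ as the kernel of $\T_2$, then apply rank--nullity across the indicated cut. The only difference is that you explicitly reconcile the two genericity notions (generic projectors minimizing $\dim\ker H$ versus maximal-rank tensors via Proposition~\ref{prop:open dense} and the density of injective/surjective ``projector-type'' assignments), a step the paper leaves implicit.
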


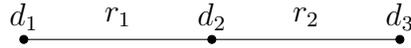
\begin{figure}[htp]
\begin{tikzpicture}[scale = 0.5]
\draw (0,0) -- (10,0) ;
\filldraw (0,0) circle(3pt);
\filldraw (5,0) circle(3pt);
\filldraw (10,0) circle(3pt);
\draw (0,0.6) node{$d_1$};
\draw (5,0.6) node{$d_2$};
\draw (10,0.6) node{$d_3$};
\draw (2.5,0.6) node{$r_1$};
\draw (7.5,0.6) node{$r_2$};
\end{tikzpicture}

\caption{A $\GQSAT$ instance with 3 qudits (vertices), and 2 projectors (edges). The dimensions of the qudits and the ranks of the projectors are parametrized by $d_1,d_2,d_3$ and $r_1,r_2$ respectively.}
\label{fig:three_qudits}
\end{figure}

\begin{figure}[htp]
\begin{tikzpicture}[scale = 0.5]
\draw (1,0) -- (8,0) ;
\draw (1,3) -- (8,3) ;
\draw (5,0) -- (4,3) ;

\draw (-0.6,3) node{$d_1 d_2 -r_1 $};
\draw (0,0) node{$d_3$};
\draw (4,3.5) node{$\T_1$};
\draw (5,-0.5) node{$\T_2$};
\draw (4,1.5) node{$d_2$};
\draw (9,3) node{$d_1$};
\draw (9,0) node{$r_2$};

\draw (-3,1.5) node{{\small{Input}}};
\draw (12,1.5) node{{\small{Output}}};
\end{tikzpicture}
\caption{$(G_3,c)$.}
\label{fig:three_qudits_tensor_network}
\end{figure}
\begin{proof}
Observe the cut depicted in Fig.~\ref{fig:three_qudits_tensor_network_with_cut}. By choosing the top tensor $\T_1$ appropriately, the image of the tensor network until this cut is the set of states which satisfy the first constraint acting on qudits 1 and 2. We can choose the bottom tensor $\T_2$ so that its kernel is the allowed subspace of the second projector acting on qudits 2 and 3. In this way, the dimension of the kernel of this entire tensor network is precisely the dimension of the satisfying subspace.
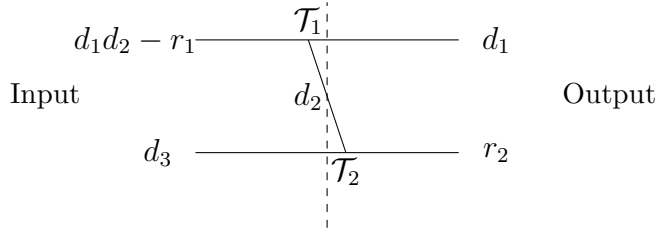
\begin{figure}[htp]
\begin{tikzpicture}[scale = 0.5]
\draw (1,0) -- (8,0) ;
\draw (1,3) -- (8,3) ;
\draw (5,0) -- (4,3) ;

\draw (-0.6,3) node{$d_1 d_2 -r_1 $};
\draw (0,0) node{$d_3$};
\draw (4,3.5) node{$\T_1$};
\draw (5,-0.5) node{$\T_2$};
\draw (4,1.5) node{$d_2$};
\draw (9,3) node{$d_1$};
\draw (9,0) node{$r_2$};

\draw (-3,1.5) node{{\small{Input}}};
\draw (12,1.5) node{{\small{Output}}};

\draw[dashed] (4.5,-2) -- (4.5,4);
\end{tikzpicture}
\caption{A specific cut of the tensor network which is used in the proof of Claim~\ref{cl:gen_qsat_eq_qmf}.}
\label{fig:three_qudits_tensor_network_with_cut}
\end{figure}
By the rank-nullity theorem, the nullity is equal to $d_3(d_1d_2-r_1)$ minus its rank.
\end{proof}
Let us focus on the special case $d_1=d_3=3, d_2=2, r_1=1,r_2=5$
. It can easily be checked that the quantum min-cut of the network is $15$. We will give two different proofs that the quantum max flow is $14$. From now on, we will use the convention for tensor networks that inputs are on the left and outputs are on the right.
\begin{align*}
\QMF\left(\begin{tikzpicture}[baseline={([yshift=-.5ex]current bounding box.center)},scale = 0.2]
\draw (1,0) -- (8,0) ;
\draw (1,3) -- (8,3) ;
\draw (4.5,0) -- (4.5,3) ;
\draw (0,3) node{$5$};
\draw (0,0) node{$3$};
\draw (3.5,1.5) node{$2$};
\draw (9,3) node{$3$};
\draw (9,0) node{$5$};
\end{tikzpicture}
\right) &=
3(3 \cdot 2 -1) - \GQSAT\left(
\begin{tikzpicture}[baseline={([yshift=-.5ex]current bounding box.center)},scale = 0.3]
\draw (0,0) -- (10,0) ;
\filldraw (0,0) circle(6pt);
\filldraw (5,0) circle(6pt);
\filldraw (10,0) circle(6pt);
\draw (0,1) node{$3$};
\draw (5,1) node{$2$};
\draw (10,1) node{$3$};
\draw (2.5,1) node{$1$};
\draw (7.5,1) node{$5$};
\end{tikzpicture}
\right) \\
&= 12 + 3(3 \cdot 2 - 5) - \GQSAT\left(\begin{tikzpicture}[baseline={([yshift=-.5ex]current bounding box.center)},scale = 0.3]
\draw (0,0) -- (10,0) ;
\filldraw (0,0) circle(6pt);
\filldraw (5,0) circle(6pt);
\filldraw (10,0) circle(6pt);
\draw (0,1) node{$3$};
\draw (5,1) node{$2$};
\draw (10,1) node{$3$};
\draw (2.5,1) node{$5$};
\draw (7.5,1) node{$1$};
\end{tikzpicture}\right) \\
&= 12 + \QMF\left( \begin{tikzpicture}[baseline={([yshift=-.5ex]current bounding box.center)},scale = 0.2]
\draw (1,0) -- (8,0) ;
\draw (1,3) -- (8,3) ;
\draw (4.5,0) -- (4.5,3) ;
\draw (0,3) node{$1$};
\draw (0,0) node{$3$};
\draw (3.5,1.5) node{$2$};
\draw (9,3) node{$3$};
\draw (9,0) node{$1$};
\end{tikzpicture}
\right) \\
&= 12 + \QMF\left( \begin{tikzpicture}[baseline={([yshift=-.5ex]current bounding box.center)},scale = 0.2]
\draw (1,0) -- (4.5,0) ;
\draw (4.5,3) -- (8,3) ;
\draw (4.5,0) -- (4.5,3) ;
\draw (0,0) node{$3$};
\draw (3.5,1.5) node{$2$};
\draw (9,3) node{$3$};
\end{tikzpicture}
\right) \\
&= 14.
\end{align*}
Here we used Claim~\ref{cl:gen_qsat_eq_qmf} in the first and third steps.
%
%
In the second proof we show that
\[ \GQSAT\left(
\begin{tikzpicture}[baseline={([yshift=-.5ex]current bounding box.center)},scale = 0.3]
\draw (0,0) -- (10,0) ;
\filldraw (0,0) circle(6pt);
\filldraw (5,0) circle(6pt);
\filldraw (10,0) circle(6pt);
\draw (0,1) node{$3$};
\draw (5,1) node{$2$};
\draw (10,1) node{$3$};
\draw (2.5,1) node{$1$};
\draw (7.5,1) node{$5$};
\end{tikzpicture}
\right)=1\]
directly. Fix a generic choice for the projectors. Since there is a 5-dimensional constraint on qudits 2 and 3 (which have dimension 2 and 3 respectively), there exists a unique state (up to a phase) $\ket{\psi}$ such that $\Pi_2 \ket{\psi} = 0$. Since $\Pi_1$ has rank-1, we can always write it as $\Pi_1 = \ketbra{\varphi}{\varphi}$. We can use the Schmidt decomposition to express $\ket{\varphi} = \sum_{i=1}^2 \lambda_i \ket{\alpha_i} ^{(1)} \otimes \ket{\beta_i}^{(2)}$. (Note that the sum goes from $1$ to $\min(d_2,d_3)=2$, and not $3$.) The states $\{ \ket{\alpha_1}, \ket{\alpha_2} \}$ can be completed to an orthonormal basis by some state $\ket{\alpha_3}$. The state $\ket{\Omega} = \ket{\alpha_3}^{(1)} \otimes \ket{\psi}^{(2,3)}$ satisfies $\Pi_1^{(1,2)} \otimes I ^{(3)} \ket{\Omega} = \Pi_2^{(2,3)} \otimes  I ^{(1)} \ket{\Omega}=0$. It can  also be shown that this is the only state in the kernel (for generic choices), which proves that $\GQSAT(G_2,d=(3,2,3),r=(1,5) )=1$.
%
%
%
%
%
%
%
%

\begin{claim}
\begin{multline*}
\GQSAT\left(\begin{tikzpicture}[baseline={([yshift=-.5ex]current bounding box.center)},scale = 0.3]
\draw (0,0) -- (9,0) ;
\filldraw (0,0) circle(6pt);
\filldraw (3,0) circle(6pt);
\filldraw (6,0) circle(6pt);
\filldraw (9,0) circle(6pt);
\draw (0,1) node{$d_1$};
\draw (3,1) node{$d_2$};
\draw (6,1) node{$d_3$};
\draw (9,1) node{$d_4$};
\draw (1.5,1) node{$r_1$};
\draw (4.5,1) node{$r_2$};
\draw (7.5,1) node{$r_3$};
\end{tikzpicture}
\right)
= (d_1 d_2 -r_1) (d_3 d_4 -r_3) \\ -  \QMF\left(
\begin{tikzpicture}[baseline={([yshift=-.5ex]current bounding box.center)},scale = 0.3]
\draw (1,0) -- (8,0) ;
\draw (1,3) -- (8,3) ;
\draw (3,0) -- (6,1.5) -- (8,1.5) ;
\draw (3,3) -- (6,1.5) ;
\draw (-1.5,0) node{$d_3 d_4 -r_3$};
\draw (-1.5,3) node{$d_1d_2 - r_1$};
\draw (9,0) node{$d_4$};
\draw (9,1.5) node{$r_2$};
\draw (9,3) node{$d_1$};
\draw (4.5,0.75) node{$d_3$};
\draw (4.5,2.25) node{$d_2$};
\end{tikzpicture}
\right)
\end{multline*}
\label{cl:gen_qsat_eq_qmf2}
\end{claim}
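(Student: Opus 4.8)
The plan is to mimic the proof of Claim~\ref{cl:gen_qsat_eq_qmf}, now reading the three vertices of the tensor network on the right-hand side of the claim as the three projectors of the $\GQSAT$ instance. Label the qudits $1,2,3,4$ with dimensions $d_1,d_2,d_3,d_4$ and write $H=\Pi_1^{(1,2)}+\Pi_2^{(2,3)}+\Pi_3^{(3,4)}$, where $\Pi_i$ has rank $r_i$. I would take the top vertex (input $d_1d_2-r_1$, output $d_1$, internal $d_2$) to be an isomorphism $\iota_1$ of $\C^{d_1d_2-r_1}$ onto $\ker\Pi_1\subseteq\C^{d_1}\otimes\C^{d_2}$; symmetrically, the bottom vertex is an isomorphism $\iota_3$ of $\C^{d_3d_4-r_3}$ onto $\ker\Pi_3\subseteq\C^{d_3}\otimes\C^{d_4}$; and the central vertex (internal $d_2$, internal $d_3$, output $r_2$) is the middle projector co-restricted to its image, i.e.\ a surjection $\tilde\Pi_2:\C^{d_2}\otimes\C^{d_3}\to\C^{r_2}$ with $\ker\tilde\Pi_2=\ker\Pi_2$.

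Contracting the network then produces $\beta=(I_{d_1}\otimes\tilde\Pi_2\otimes I_{d_4})\circ(\iota_1\otimes\iota_3)$, a map $\C^{d_1d_2-r_1}\otimes\C^{d_3d_4-r_3}\to\C^{d_1}\otimes\C^{r_2}\otimes\C^{d_4}$, in which the two internal legs feed qudits $2$ and $3$ into $\Pi_2$ while qudits $1$ and $4$ pass to the outputs. Since $\iota_1\otimes\iota_3$ is an isomorphism onto $U:=\ker\Pi_1\otimes\ker\Pi_3$ and $\tilde\Pi_2$ has the same kernel as $\Pi_2$, the kernel of $\beta$ is exactly $U\cap\ker\Pi_2^{(2,3)}$, which is precisely $\ker H$. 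Applying rank-nullity to $\beta$, whose domain has dimension $\dim U=(d_1d_2-r_1)(d_3d_4-r_3)$, yields, for this particular choice of tensors, the pointwise identity $\rk(\beta)=(d_1d_2-r_1)(d_3d_4-r_3)-\dim\ker H$.

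Next I would promote this pointwise identity to the claimed equality of extremal quantities. The assignment above is a dictionary between triples $(\Pi_1,\Pi_2,\Pi_3)$ of projectors of the prescribed ranks and tensor assignments with top and bottom tensors injective and central tensor surjective; it is onto in both directions and preserves $\rk(\beta)$, since from $\iota_1,\iota_3,\tilde\Pi_2$ one recovers $\Pi_1,\Pi_3$ as the projectors onto $(\operatorname{im}\iota_1)^\perp,(\operatorname{im}\iota_3)^\perp$ and $\Pi_2$ from $\ker\tilde\Pi_2$. Hence $\max_{\Pi}\rk(\beta)$ equals the maximum of $\rk(\beta)$ over such structured tensors. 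Injectivity of the top and bottom maps and surjectivity of the central map are each open dense conditions (the dimensions satisfy $d_1d_2-r_1\le d_1d_2$, $d_3d_4-r_3\le d_3d_4$, and $r_2\le d_2d_3$), and the locus realizing the maximal rank of the whole network is open dense by Proposition~\ref{prop:open dense}; these open dense sets therefore intersect, so the structured maximum equals $\QMF$. Combining with the pointwise identity gives $\min_{\Pi}\dim\ker H=(d_1d_2-r_1)(d_3d_4-r_3)-\QMF$, and since the generic value of $\dim\ker H$ is its minimum, this is exactly $\GQSAT=(d_1d_2-r_1)(d_3d_4-r_3)-\QMF$.

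The main obstacle is the genericity bookkeeping of the last step: one must verify that maximizing $\rk(\beta)$ over the \emph{structured} tensors coming from honest projectors recovers the unrestricted $\QMF$, and dually that the generic (hence minimal) value of $\dim\ker H$ is attained on the same open dense set. The contraction and the rank-nullity computation are routine; the care lies entirely in matching the two notions of genericity, which the open dense intersection argument above is designed to supply.
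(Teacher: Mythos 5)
Your proposal is correct and is essentially the argument the paper intends: the paper omits the proof of this claim, stating only that it ``follows the same steps'' as Claim~\ref{cl:gen_qsat_eq_qmf}, and your construction (top and bottom tensors as isomorphisms onto $\ker\Pi_1$ and $\ker\Pi_3$, middle tensor as $\Pi_2$ co-restricted to its image, then rank--nullity) is exactly that adaptation. You additionally spell out the genericity bookkeeping that the paper leaves implicit, which is a welcome but not divergent elaboration.
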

The proof follows the same steps as of Claim~\ref{cl:gen_qsat_eq_qmf}, and is thus omitted.
This claim allows us to give an alternative proof that the quantum max flow of Example~\ref{eg:rk7 8} is indeed 7. The network in Fig.~\ref{fig:rk8} is a special case of the tensor network in the above claim when we choose $d_1=d_2=d_3=d_4=2,\ r_1=r_3=1,\ r_2=2$, and switch the roles of the inputs and outputs (which has no effect on the quantum max flow, see Remark~\ref{re:replace input output}).

By claim~\ref{cl:gen_qsat_eq_qmf2}, we only need to show that
\[ \GQSAT\left(\begin{tikzpicture}[baseline={([yshift=-.5ex]current bounding box.center)},scale = 0.5]
\draw (0,0) -- (9,0) ;
\filldraw (0,0) circle(6pt);
\filldraw (3,0) circle(6pt);
\filldraw (6,0) circle(6pt);
\filldraw (9,0) circle(6pt);
\draw (0,1) node{$2$};
\draw (3,1) node{$2$};
\draw (6,1) node{$2$};
\draw (9,1) node{$2$};
\draw (1.5,1) node{$1$};
\draw (4.5,1) node{$2$};
\draw (7.5,1) node{$1$};
\end{tikzpicture}
\right)
 = 2.\]
For a generic choice of the projectors, there always exists an invertible operator $P=P_1 \otimes P_2 \otimes P_3 \otimes P_4$ so that
\[ (P_1 \otimes P_2) \Pi_1 (P_1^{-1} \otimes P_2^{-1}) = (P_3 \otimes P_4) \Pi_3 (P_3^{-1} \otimes P_4^{-1}) = \ketbra{\psi^-}{\psi^-} \]
and
\[ (P_2 \otimes P_3) \Pi_2 (P_2^{-1} \otimes P_3^{-1})=\ketbra{\psi^-}{\psi^-} + \ketbra{\psi^+}{\psi^+},\]
where $\ket{\psi^\pm} = \frac{1}{\sqrt{2}}\ket{01} \pm \ket{10}$, $\Pi_1$ is the projector acting on the two leftmost qubits, $\Pi_2$ on the two center qubits, and $\Pi_3$ on the two rightmost qubits.
This is achieved by using $P_2$ and  $P_3$ to "fix" $\Pi_2$(note that the rank of $\Pi_2$ is $2$), $P_1$ to "fix" $\Pi_1$, and $P_4$ to "fix" $\Pi_3$ using the simple transformation described in~\cite{laumann2010product,bravyi2011efficient}. It is easy to check that $\ker(P H P^{-1}) =\spn \{\ket{0000}, \ket{1111} \}$, which implies that $\dim \ker (H) = 2$.

\paragraph{\bf An open question.} Ref.~\cite{sattath2015when} gives a lowerbound on $\GQSAT$ and they conjecture that their lowerbound is tight in some appropriate limit. An analogous conjecture in our setting might be:
\begin{equation}
 \forall G , c,\  \lim_{n \rightarrow \infty} \QMF(G, nc) = \QMC(G,nc) ,
\label{eq:conj}
\end{equation}
where $nc$ means multiplying all the capacities by $n$. One can interpret a result as the above equation as saying that a quantum phenomenon ($\QMF(\cdot) \neq \QMC(\cdot)$) disappears in a large system.
We do not know whether Eq.~\ref{eq:conj} implies the tightness conjecture of Ref.~\cite{sattath2015when} or vice versa.

\section*{Acknowledgement}

We thank Matthew Hastings for offering many valuable and insightful suggestions to this paper.

\bibliographystyle{plain}
\bibliography{maxflowmincutbib}

\appendix
\section{Proofs}
We prove Lemma \ref{lem:map dense} and the result in Example \ref{eg:rk3 4} in the following subsections, respectively.

\subsection{Proof of Lemma \ref{lem:map dense}}
\label{sec:proof lemma dense}
\begin{lem}\cite{dur2000three}
Let $U,V,W$ be vector spaces isomorphic to $\C^2$. Then the set of linear maps $\Phi: U \longrightarrow V \otimes W$, which can be written in the form $|1\rangle_U \mapsto |1\rangle_V\otimes|1\rangle_W$, $|2\rangle_U \mapsto |2\rangle_V\otimes|2\rangle_W$ under appropriate bases of $U,V$ and $W$, is an open dense subset of $Hom(U, V \otimes W)$.
\begin{proof}
For a matrix $A \in M_2(\C)$, denote the $i$-th row, the $j$-th column and the $(i,j)$-entry of $A$ by $A(i,\cdot)$, $A(\cdot,j)$ and $A(i,j)$, respectively.

Fix an arbitrary basis $\{|1\rangle, |2\rangle\}$ of $U,V$ and $W$. Then any linear map $\Phi: U \longrightarrow V \otimes W$ has the form:
\begin{multline} \label{equ:Phi}
 |1\rangle \mapsto A(1,1)|11\rangle + A(1,2)|12\rangle+A(2,1)|21\rangle+A(2,2)|22\rangle,\\|2\rangle \mapsto B(1,1)|11\rangle + B(1,2)|12\rangle+B(2,1)|21\rangle+B(2,2)|22\rangle.
\end{multline}

Let $A_{\Phi} = (A(i,j))_{1 \leq i,j \leq 2}, B_{\Phi} = (B(i,j))_{1 \leq i,j \leq 2} \in M_{2}(\C)$, then $Hom(U, V \otimes W)$ is isomorphic to $M_2(\C) \times M_2(\C)$ with each map $\Phi$ corresponding to $(A_{\Phi},B_{\Phi})$.

Let $\A \subset M_2(\C) \times M_2(\C)$ contain all the pairs $(A,B)$ which satisfies $(1)-(3)$:
\begin{enumerate}
\item $\det(A) \neq 0$;



\item $A^{-1}B$ has two distinct eigenvalues, or equivalently, $(tr(A^{-1}B))^2 - 4 \det(A^{-1}B) \neq 0$;

\item $(A^{-1}B)(1,2) \neq 0$, $(A^{-1}B)(2,1) \neq 0. $
\end{enumerate}

A direct consequence of $(3)$ is that $A, B$ are linearly independent in $M_2(\C)$. It is clear that the complement of $\A$ is a proper subvariety of $M_2(\C) \times M_2(\C)$. By \cite{hartshorne1977algebraic}, $\A$ is an open dense subset of $M_2(\C) \times M_2(\C)$. We prove that each pair $(A,B) \in \A$ gives a linear map $\Phi$ satisfying the property in the statement of the lemma.

Let $\lambda_1, \lambda_2$ be the two distinct eigenvalues of $A^{-1}B$, and define $D_i = \lambda_i I - A^{-1}B, i = 1,2$. Then $D_1$, $D_2$ are both non-zero matrices of rank $1$.

$$D_i =
\begin{pmatrix}
\lambda_i - (A^{-1}B)_{11} &  -(A^{-1}B)_{12} \\
-(A^{-1}B)_{21}            & \lambda_i - (A^{-1}B)_{22} \\
\end{pmatrix}
$$
Thus there are non-zero column vectors $u_1, u_2$ and non-zero row vectors $v_1, v_2$ such that $D_1 = u_1.v_1$ and $D_2 = u_2.v_2$. Moreover, noting that $D_i(1,2) = -(A^{-1}B)(1,2) \neq 0, D_i(2,1) = - (A^{-1}B)(2,1) \neq 0 $, we have that $u_i$ is proportional to $D_i(\cdot, 1)$ and $v_i$ is proportional to $D_i(1,\cdot)$.

Since $\lambda_1 \neq \lambda_2$, and $(A^{-1}B)(1,2) \neq 0, (A^{-1}B)(2,1) \neq 0$, we have $D_1(\cdot,1),D_2(\cdot,1)$ are linearly independent and $D_1(1,\cdot),D_2(1,\cdot)$ are linearly independent. Therefore, $\{u_1,u_2\}$ and $\{v_1,v_2\}$ are each a basis of $\C^2$. Since $A$ is invertible, $\{A.u_1, A.u_2\}$ is also a basis of $\C^2$.

Now we define a linear map $\Phi: U \longrightarrow V \otimes W$ by the pair $(A,B)$ according to Equation \ref{equ:Phi}. Then the coordinate of $\Phi(\lambda_i |1\rangle - |2\rangle)$, written in the matrix form under the basis $\{|j\rangle |k\rangle: 1 \leq j,k\leq 2\}$ of $V \otimes W$, is $\lambda_i A - B = A(\lambda_i - A^{-1}B) = A.u_i.v_i, \, i = 1,2$. Let $|i\rangle_V \in V$ be the vector with coordinate $A.u_i$, $|i\rangle_W \in W$ be the vector with coordinate $v_i$, and let $|i\rangle_U = \lambda_i |1\rangle - |2\rangle \in U$, then $\{|1\rangle_U,|2\rangle_U\}$, $\{|1\rangle_V,|2\rangle_V\}$, $\{|1\rangle_W,|2\rangle_W\}$ are basis of $U,V,W$, respectively, and $\Phi(|i\rangle_U) = |i\rangle_V\otimes|i\rangle_W$.

\end{proof}
\end{lem}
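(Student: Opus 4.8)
The plan is to reduce the statement to a classical fact about pencils of $2\times 2$ matrices. Fixing bases of $U,V,W$, a map $\Phi$ is determined by the two images $\Phi(\ket{1}_U),\Phi(\ket{2}_U)\in V\otimes W$, which under the identification $V\otimes W\cong M_2(\C)$ become matrices $A$ and $B$; thus $\Hom(U,V\otimes W)\cong M_2(\C)\times M_2(\C)$. A change of basis on $U$ replaces $(A,B)$ by a pair of invertible linear combinations of $A$ and $B$, while changes of basis on $V$ and on $W$ act by $(A,B)\mapsto(MAN,MBN)$ with $M,N$ invertible. Under these moves the target form $\ket{1}_U\mapsto\ket{11}$, $\ket{2}_U\mapsto\ket{22}$ corresponds exactly to the pair of matrix units $(E_{11},E_{22})$. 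So it suffices to show that a generic pencil $\lambda A-\mu B$ of $2\times 2$ matrices can be brought to $(E_{11},E_{22})$ by these moves.

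First I would single out the genericity locus as the set of $(A,B)$ for which the binary quadratic form $\det(\lambda A-\mu B)$ has two distinct roots in $\mathbb{P}^1$. This is the nonvanishing of one discriminant, hence a Zariski-open condition whose complement is a proper hypersurface; properness is witnessed by $(E_{11},E_{22})$ itself, whose determinant form $-\lambda\mu$ has the distinct roots $[0:1]$ and $[1:0]$. On this locus I may apply a change of basis on $U$ so that neither root lies at infinity, i.e.\ so that $A$ becomes invertible (only two values of $[a:b]$ are excluded), and then the two roots become the two distinct eigenvalues $\lambda_1,\lambda_2$ of $C:=A^{-1}B$.

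The heart of the argument is a simultaneous rank-one reduction driven entirely by the distinctness of the eigenvalues. For $i=1,2$ set $D_i:=\lambda_i I-C$; since $\lambda_i$ is an eigenvalue of the diagonalizable matrix $C$ with the other eigenvalue distinct, $D_i$ has rank one, so I may write $D_i=p_iq_i^{T}$ and hence $M_i:=\lambda_i A-B=AD_i=(Ap_i)\,q_i^{T}$. The crucial point is that distinctness controls both factors at once: $\ker D_i$ is the $\lambda_i$-eigenline of $C$ while $\operatorname{im}D_i$ is the eigenline of the \emph{other} eigenvalue, so $\ker D_1\neq\ker D_2$ forces $\{q_1,q_2\}$ to be independent, and $\operatorname{im}D_1\neq\operatorname{im}D_2$ forces $\{p_1,p_2\}$, hence $\{Ap_1,Ap_2\}$, to be independent. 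Choosing $M$ with $M(Ap_i)\parallel e_i$ and $N$ with $N^{T}q_i\parallel e_i$, and absorbing the resulting scalars into a final diagonal rescaling, sends $(M_1,M_2)$ to $(E_{11},E_{22})$; since $M_1,M_2$ are an invertible combination of $A,B$, this is an admissible sequence of moves.

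I expect the main obstacle to be exactly this simultaneous control of the left factors $\{p_i\}$ and the right factors $\{q_i\}$: diagonalizing the pencil on one side is routine, but a \emph{single} pair $(M,N)$ can reduce both $M_1$ and $M_2$ to distinct diagonal units only once one knows the column directions and the row directions are each independent, and the eigenline computation above is what secures this (using distinctness twice). Finally, to obtain that the reducible set is open \emph{and} dense, I would argue that it coincides with the distinct-root locus: one inclusion is the construction just given, and the reverse holds because $(E_{11},E_{22})$ lies in that locus and the locus is invariant under all the moves (the $U$-moves act projectively on $(\lambda,\mu)$ and preserve distinctness, while the $V,W$-moves only rescale $\det$). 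Being the complement of a proper hypersurface in the irreducible variety $M_2(\C)\times M_2(\C)$, it is Zariski-open and dense, and therefore open and dense in the Euclidean topology as well \cite{hartshorne1977algebraic}.
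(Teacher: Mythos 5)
Your proof is correct, and its computational core coincides with the paper's: identify $\Hom(U,V\otimes W)$ with pairs $(A,B)\in M_2(\C)\times M_2(\C)$, pass to the pencil $\lambda A-B$, note that at each of the two distinct roots the member $\lambda_i A-B=A(\lambda_i I-A^{-1}B)$ has rank one, factor it as an outer product, and use the independence of the two column directions and of the two row directions to assemble the new bases of $U$, $V$, $W$. The genuine difference lies in how the generic locus is cut out and how that independence is certified. The paper works on the smaller set $\A$ where $\det A\neq 0$, the eigenvalues of $A^{-1}B$ are distinct, \emph{and} both off-diagonal entries of $A^{-1}B$ are nonzero; this last, somewhat artificial condition is what lets the authors identify $u_i$ and $v_i$ with an explicit column and row of $\lambda_i I-A^{-1}B$ and verify independence entrywise. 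You instead observe that $\ker(\lambda_i I-C)$ is the $\lambda_i$-eigenline of $C$ while $\operatorname{im}(\lambda_i I-C)$ is the \emph{other} eigenline, so distinctness of the eigenvalues alone forces both independences, and you remove the hypothesis $\det A\neq 0$ by a preliminary basis change on $U$ (legitimate, since only the two roots of $\det(\lambda A-\mu B)$ need be avoided). The payoff is a sharper conclusion: the set of maps admitting the normal form is \emph{exactly} the complement of the single discriminant hypersurface of $\det(\lambda A-\mu B)$, rather than merely containing the paper's open dense subset $\A$; your invariance argument for the reverse inclusion is a nice touch the paper does not attempt. Either version yields the open-dense statement of the lemma.
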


\subsection{Proof of $\QMF(G_1,c_1,L_1) \leq 3$ in Example \ref{eg:rk3 4}}
\label{subsec:example rk 3}

We prove that the tensor network $(G_1,c_1,L_1)$ in Example \ref{eg:rk3 4} has maximal rank at most $3$. See also Figure \ref{fig:rk3 copy}. It is shown below that for a generic choice of tensor $\T$, the resulting linear map $\beta(G_1,c_1,L_1;\T)$ has rank at most $3$. It can be proved in the same way as Proposition \ref{prop:open dense} that the set of tensors which realize $\QMF(G_1,c_1,L_1)$ is an open dense subset. Therefore, $\QMF(G_1,c_1,L_1) \leq 3 < \QMC(G_1,c_1)$.

\begin{figure}
\begin{tikzpicture}[scale = 0.5]
\begin{scope}
\draw (3,3) -- (6,0);
\draw [color = white, line width = 2mm](3,0) -- (6,3);
\draw (3,0) -- (6,3);
\draw (0,3) -- (9,3);
\draw (0,0) -- (9,0) ;

\filldraw (3,0) circle(3pt);
\filldraw (3,3) circle(3pt);
\filldraw (6,0) circle(3pt);
\filldraw (6,3) circle(3pt);

\draw (3,-0.8) node{$\T$};
\draw (3,3.8) node{$\T$};
\draw (6,-0.8) node{$\T$};
\draw (6,3.8) node{$\T$};

\draw (2.5,0) node{{\tiny{$1$}}};
\draw (3.5,0) node{{\tiny{$2$}}};
\draw (3.3,0.4) node{{\tiny{$3$}}};

\draw (5.5,0) node{{\tiny{$3$}}};
\draw (6.5,0) node{{\tiny{$1$}}};
\draw (5.7,0.4) node{{\tiny{$2$}}};

\draw (2.5,3) node{{\tiny{$1$}}};
\draw (3.5,3) node{{\tiny{$2$}}};
\draw (3.3,2.6) node{{\tiny{$3$}}};

\draw (5.5,3) node{{\tiny{$3$}}};
\draw (6.5,3) node{{\tiny{$1$}}};
\draw (5.7,2.6) node{{\tiny{$2$}}};

\end{scope}
\end{tikzpicture}
\caption{$(G_1,c_1,L_1)$}\label{fig:rk3 copy}
\end{figure}
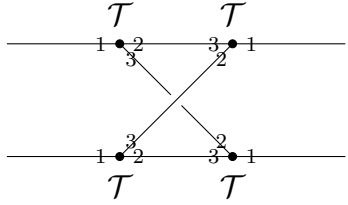

Let $\T = \{T_{ijk}: i,j,k = 0,1\}$ be a generic tensor, by Corollary \ref{cor:map dense}, there exist invertible tensors $A = \{A_{ij}: i,j = 0,1\}, B = \{B_{ij}: i,j = 0,1\}, C = \{C_{ij}: i,j = 0,1\}$, such that the equality in Figure \ref{tensorTS1} holds, where $\Sen = \{\Sen_{ijk}: i,j,k = 0, 1\} \in (\C^2)^{\otimes 3}$ is the tensor such that $\Sen_{ijk} = 1$ if $i=j=k$, and $\Sen_{ijk} = 0$ otherwise. For the readers' convenience, we display the equality in Figure \ref{tensorTS}.

\begin{figure}
\begin{tikzpicture}[scale = 0.5]
\begin{scope}
\draw (0,0) -- (2,0) -- (4,2);
\draw (2,0) -- (4,-2);
\draw (2,0.8) node{$\T$};
\draw (1.7,0) node{{\tiny{$1$}}};
\draw (2.3,0.3) node{{\tiny{$2$}}};
\draw (2.3,-0.3) node{{\tiny{$3$}}};
\filldraw (2,0) circle(3pt);
\end{scope}

\draw (6,0) node{$=$};

\begin{scope}[xshift = 8cm]
\draw (0,0) -- (3,0) -- (6,3);
\draw (3,0) -- (6,-3);
\draw (3,0.8) node{$\Sen$};
\draw (1.5,0.8) node{$A$};
\draw (4.5,2.3) node{$B$};
\draw (4.5,-2.3) node{$C$};

\filldraw (3,0) circle(3pt);
\filldraw (1.5,0) circle(3pt);
\filldraw (4.5,1.5) circle(3pt);
\filldraw (4.5,-1.5) circle(3pt);

\draw (2.7,0) node{{\tiny{$1$}}};
\draw (3.3,0.3) node{{\tiny{$2$}}};
\draw (3.3,-0.3) node{{\tiny{$3$}}};

\draw (1.2,0) node{{\tiny{$1$}}};
\draw (1.8,0) node{{\tiny{$2$}}};

\draw (4.2,1.2) node{{\tiny{$1$}}};
\draw (4.8,1.8) node{{\tiny{$2$}}};

\draw (4.2,-1.2) node{{\tiny{$1$}}};
\draw (4.8,-1.8) node{{\tiny{$2$}}};

\end{scope}
\end{tikzpicture}
\caption{Tensors $\T$ and $\Sen$} \label{tensorTS}
\end{figure}

With this equality, the network in Figure \ref{fig:rk3 copy} with a generic tensor $\T$ is equal to the left network shown in Figure \ref{rank3_2}, which produces the map $\beta(G_1,c_1,L_1;\T)$. Let $D$ be the tensor $D = \{D_{ij}: D_{ij} = \sum\limits_{k} B_{ik}C_{jk}, i,j = 0,1\}$. Since the tensor $A$ is an invertible matrix, the rank of $\beta(G_1,c_1,L_1;\T)$ is not changed when the $A\,'$s on the two ends of the left network in Figure \ref{rank3_2} are removed, which results in the right network shown in Figure \ref{rank3_2}. Denote by $\Phi$ the linear map produced by the resulting network. It is straightforward that, when viewed as a linear map from $\C^2 \otimes \C^2$ to $\C^2 \otimes \C^2$, $\Phi |i,j\rangle = \sum\limits_{k,l} F(i,j;k,l) |k,l\rangle,  $ where $F(i,j;k,l) = D_{ik}D_{kj}D_{jl}D_{li}$. One can check directly that $F(i,j;k,l) = F(j,i;k,l)$, for any $k,l = 0,1$. Thus, $\Phi |0,1\rangle = \Phi |1,0\rangle$, and hence $rank(\Phi) = rank(\beta(G_1,c_1,L_1;\T))$ is at most $3$, which implies $\QMF(G_1,c_1,L_1) \leq 3$.

\begin{figure}
\begin{tikzpicture}[scale = 0.5]
\begin{scope}
\draw (3,4.5) -- (7.5,0);
\draw [color = white, line width = 2mm](3,0) -- (7.5,4.5);
\draw (3,0) -- (7.5,4.5);
\draw (0,4.5) -- (10.5,4.5);
\draw (0,0) -- (10.5,0) ;

\filldraw (3,0) circle(3pt);
\filldraw (3,4.5) circle(3pt);
\filldraw (7.5,0) circle(3pt);
\filldraw (7.5,4.5) circle(3pt);
\filldraw (1.5,0) circle(3pt);
\filldraw (4.5,0) circle(3pt);
\filldraw (6,0) circle(3pt);
\filldraw (9,0) circle(3pt);
\filldraw (1.5,4.5) circle(3pt);
\filldraw (4.5,4.5) circle(3pt);
\filldraw (6,4.5) circle(3pt);
\filldraw (9,4.5) circle(3pt);
\filldraw (4.5,1.5) circle(3pt);
\filldraw (4.5,3) circle(3pt);
\filldraw (6,1.5) circle(3pt);
\filldraw (6,3) circle(3pt);

\draw (3,-0.8) node{$\Sen$};
\draw (3,5.3) node{$\Sen$};
\draw (7.5,-0.8) node{$\Sen$};
\draw (7.5,5.3) node{$\Sen$};
\draw (1.5,-0.8) node{$A$};
\draw (1.5,5.3) node{$A$};
\draw (9,-0.8) node{$A$};
\draw (9,5.3) node{$A$};
\draw (4.5,-0.8) node{$B$};
\draw (4.5,5.3) node{$B$};
\draw (6.7,1.5) node{$B$};
\draw (6.7,3) node{$B$};
\draw (3.8,1.5) node{$C$};
\draw (3.8,3) node{$C$};
\draw (6,-0.8) node{$C$};
\draw (6,5.3) node{$C$};

\draw (2.7,0) node{{\tiny{$1$}}};
\draw (3.3,0) node{{\tiny{$2$}}};
\draw (3.4,0.4) node{{\tiny{$3$}}};

\draw (7.2,0) node{{\tiny{$3$}}};
\draw (7.8,0) node{{\tiny{$1$}}};
\draw (7.1,0.4) node{{\tiny{$2$}}};

\draw (2.7,4.5) node{{\tiny{$1$}}};
\draw (3.3,4.5) node{{\tiny{$2$}}};
\draw (3.4,4.1) node{{\tiny{$3$}}};

\draw (7.2,4.5) node{{\tiny{$3$}}};
\draw (7.8,4.5) node{{\tiny{$1$}}};
\draw (7.1,4.1) node{{\tiny{$2$}}};

\draw (1.2,0) node{{\tiny{$1$}}};
\draw (1.8,0) node{{\tiny{$2$}}};

\draw (4.2,0) node{{\tiny{$1$}}};
\draw (4.8,0) node{{\tiny{$2$}}};

\draw (5.7,0) node{{\tiny{$2$}}};
\draw (6.3,0) node{{\tiny{$1$}}};

\draw (8.7,0) node{{\tiny{$2$}}};
\draw (9.3,0) node{{\tiny{$1$}}};

\draw (1.2,4.5) node{{\tiny{$1$}}};
\draw (1.8,4.5) node{{\tiny{$2$}}};

\draw (4.2,4.5) node{{\tiny{$1$}}};
\draw (4.8,4.5) node{{\tiny{$2$}}};

\draw (5.7,4.5) node{{\tiny{$2$}}};
\draw (6.3,4.5) node{{\tiny{$1$}}};

\draw (8.7,4.5) node{{\tiny{$2$}}};
\draw (9.3,4.5) node{{\tiny{$1$}}};

\draw (4.2,1.2) node{{\tiny{$1$}}};
\draw (4.8,1.8) node{{\tiny{$2$}}};

\draw (4.2,3.3) node{{\tiny{$1$}}};
\draw (4.8,2.7) node{{\tiny{$2$}}};

\draw (5.7,1.8) node{{\tiny{$2$}}};
\draw (6.3,1.2) node{{\tiny{$1$}}};

\draw (5.7,2.7) node{{\tiny{$2$}}};
\draw (6.3,3.3) node{{\tiny{$1$}}};

\end{scope}

\begin{scope}[xshift = 12cm]
\draw (3,4.5) -- (7.5,0);
\draw [color = white, line width = 2mm](3,0) -- (7.5,4.5);
\draw (3,0) -- (7.5,4.5);
\draw (0,4.5) -- (10.5,4.5);
\draw (0,0) -- (10.5,0) ;

\filldraw (3,0) circle(3pt);
\filldraw (3,4.5) circle(3pt);
\filldraw (7.5,0) circle(3pt);
\filldraw (7.5,4.5) circle(3pt);
\filldraw (6,0) circle(3pt);
\filldraw (6,4.5) circle(3pt);
\filldraw (6,1.5) circle(3pt);
\filldraw (6,3) circle(3pt);

\draw (3,-0.8) node{$\Sen$};
\draw (3,5.3) node{$\Sen$};
\draw (7.5,-0.8) node{$\Sen$};
\draw (7.5,5.3) node{$\Sen$};

\draw (6.7,1.5) node{$D$};
\draw (6.7,3) node{$D$};
\draw (6,-0.8) node{$D$};
\draw (6,5.3) node{$D$};

\draw (2.7,0) node{{\tiny{$1$}}};
\draw (3.3,0) node{{\tiny{$2$}}};
\draw (3.4,0.4) node{{\tiny{$3$}}};

\draw (7.2,0) node{{\tiny{$3$}}};
\draw (7.8,0) node{{\tiny{$1$}}};
\draw (7.1,0.4) node{{\tiny{$2$}}};

\draw (2.7,4.5) node{{\tiny{$1$}}};
\draw (3.3,4.5) node{{\tiny{$2$}}};
\draw (3.4,4.1) node{{\tiny{$3$}}};

\draw (7.2,4.5) node{{\tiny{$3$}}};
\draw (7.8,4.5) node{{\tiny{$1$}}};
\draw (7.1,4.1) node{{\tiny{$2$}}};

\draw (5.7,0) node{{\tiny{$1$}}};
\draw (6.3,0) node{{\tiny{$2$}}};

\draw (5.7,4.5) node{{\tiny{$1$}}};
\draw (6.3,4.5) node{{\tiny{$2$}}};

\draw (5.7,1.8) node{{\tiny{$2$}}};
\draw (6.3,1.2) node{{\tiny{$1$}}};

\draw (5.7,2.7) node{{\tiny{$2$}}};
\draw (6.3,3.3) node{{\tiny{$1$}}};

\end{scope}
\end{tikzpicture}
\caption{$(G_1,c_1,L_1)$}\label{rank3_2}
\end{figure}
\end{document}